\newtheorem*{remark*}{Remark}
\newtheorem*{maintheorem}{Main Theorem}
\newtheorem*{theorem*}{Theorem}
\newtheorem{theorem}{Theorem}[section]
\newtheorem{lemma}[theorem]{Lemma}
\newtheorem{claim}[theorem]{Claim}
\newtheorem{corollary}[theorem]{Corollary}
\newtheorem{proposition}[theorem]{Proposition}
\numberwithin{equation}{section}
\theoremstyle{definition}
\newtheorem{definition}[theorem]{Definition}
\newtheorem{example}[theorem]{Example}
\newtheorem{remark}[theorem]{Remark}
\def\leq{\leqslant }
\def\geq{\geqslant}
\begin{document}

\title{Non-statistical rational maps}

\author{Amin Talebi}
\thanks{ The author is partially supported by ERC project 818737 \textit{Emergence of wild differentiable dynamical
systems}.
}
\address{ amin.s.talebi@gmail.com\\
Sharif University of Technology, Azadi St, Tehran, Iran and LAGA -CNRS, Universit\'e Paris 13, 93430, Villetaneuse, France.}

\date{\today}
\begin{abstract}
 We show that in the family of degree $d\geq 2$ rational maps of the Riemann sphere, the closure of strictly postcritically finite maps contains a (relatively) Baire generic subset of maps displaying maximal non-statistical behavior: for a map $f$ in this generic subset, the set of accumulation points of the sequence of empirical measures of almost every point in the phase space is the largest possible one that is, the set of all $f$-invariant measures. The proofs is based on a transversality argument which allows us to control the behavior of the orbits of critical points for maps close to strictly postcritically finite rational maps and also a new concept developed in the author's PhD thesis, that we call statistical bifurcation.   
\end{abstract}

\maketitle

\tableofcontents

\section*{Introduction}

Let $X$ be a compact metric space with a reference Borel probability measure $\mu $. For a point $x\in X$ and a map $f:X \rightarrow X$, the $n^{th}$ empirical measure 
\begin{equation*}
\mathbf e_{n}^{f}(x):=\frac{1}{n} \sum_{i=0}^{n-1} \delta_{f^i (x)},
\end{equation*}
describes the distribution of the orbit of $x$ up to the $n^{\text{th}}$ iteration in the phase space, which asymptotically may or may not converge. Let us call a map $f$ \emph{non-statistical} if there is a positive measure set of points with non-convergent empirical measures. There are several (but not too much) examples of differentiable dynamical systems showing this kind of behavior in the literature. One of the first examples of the non-statistical maps is the so-called Bowen eye \cite{takens1994heteroclinic}. It is the time one map of a vector field on $\mathbb{R}^2$ with an eye like open region such that Lebesgue almost every point in this region is non-statistical. There is another example by Colli and Vargas who have proved in \cite{CV01} that on any surface there is a $C^\infty$ diffeomorphism exhibiting a wandering domain and any point in this domain has non-convergent empirical measures. Their construction was based on perturbations of a diffeomorphism having a linear horseshoe with stable and unstable manifolds which are relatively thick and in tangential position. In this direction Kiriki and Soma in \cite{KS15} showed that the existence of wandering domain with non-statistical behavior happens densely in any Newhouse domain of $\text{Diff}^r(M)$ with $2\leq r <\infty$ on any surface $M$. Let us mention the work of Crovisier et al. \cite{crovisier2020empirical} which contains examples of non-statistical maps in the context of partially hyperbolic diffeomorphisms. There is an explicit example of a non-statistical diffeomorphism of the annulus introduced by Herman that can be found in \cite{Yoc-Her}. In this direction, in \cite{talebi20} the author has introduced a class of non-statistical dynamics in the context of the diffeomorphisms of the annulus and proved the Baire genericity of this maps within the space of Anosov-Katok diffeomorphisms. In this paper he has also developed an abstract setting to study the sufficient conditions for existence of non-statistical maps in a given family of dynamics.

There are also examples of non-statistical maps in the world of more specific families of dynamical systems (e.g. polynomial maps) where one looses the possibility of local perturbations as a possible mechanism to control the statistical behavior of the orbits. Hofbauer and Keller showed in \cite{hofbauer1990} that in the one parameter family of logistic maps $f_\lambda=\lambda x(1-x)$, there exists uncountably many parameters $\lambda\in [0,4]$ such that almost every $x\in [0,1]$ has non-convergent sequence of empirical measures. Indeed they showed in another paper \cite{HK2} that there are uncountably many maps in the logistic family with \textit{maximal oscillation } property: the empirical measures of almost every point in the phase space accumulates to each invariant probability measure of the dynamics. Another example of rigid dynamics displaying non-statistical behavior is the recent work of Berger and Biebler \cite{Berger-Biebler2020}. They prove the existence of real polynomial automorohisms of $\mathbb{C}^2  $ having some wandering Fatou component on which the dynamics has non-statistical behavior. Their work also contains a generalization of the result of Kiriki-Soma \cite{KS15} to the case of $r=\infty$ or $\omega$ and also the result in \cite{KNS2019}. \\
A natural direction to extend the result of Hofbauer and Keller is to go to the one dimensional complex dynamics and ask if there is any non-statistical and maximally oscillating rational map on the Riemann sphere. In this paper we show that the answer to this question is positive. We also prove that these maps are Baire generic inside a closed subset of rational maps which has positive measure, and so in particular there are uncountably many of these maps.

\subsection*{Statement of the result}
Denote by $Rat_d$ the space of rational maps of degree $d$ on the Riemann sphere $\hat{\mathbb{C}}$. A rational map is called \emph{postcritically finite} if all of its critical points have finite orbit. A map in $Rat_d$ is called \emph{strictly postcritically finite} if each of its critical points is eventually mapped to a repelling periodic orbit (this is equivalent to say that this postcritically finite map has no periodic critical point). The closure of strictly postcritically finite maps is a subset of bifurcation locus and is called \textit{maximal bifurcation locus}. Here is our main result in this paper:

\begin{maintheorem}\label{mainTheorem}
 For a Baire generic map $f$ in the maximal bifurcation locus, the set of accumulation points of the sequence of empirical measures is equal to the set of invariant measures of $f$ for Lebesgue almost every point.
\end{maintheorem}
 
 Although the set of the strictly postcritically finite rational functions is a countable union of 3-dimensional sub-varieties of $\mathrm{Rat}_d$, its closure -- the maximal bifurcation locus -- has recently been shown to have positive measure w.r.t. the volume measure on the the space of rational maps as a complex manifold (see \cite{AMGT17}).

Let us mention that the ideas we used in this paper can be applied to the case of one dimensional real dynamics as well, and provides us with another generalization of the result of Hofbauer and Keller in \cite{HK2}. In fact we can prove the Baire genericity of maximally oscillating maps in a compact subset of parameter space of logistic family,which is of positive Lebesgue measure.

Let us say a few words about the organization of the paper. In the first section the notion of statistical bifurcation is introduced. This notion has been developed by the author in his PhD thesis. The main theorem of this section is Theorem \ref{thm.generic BLambda}. At the end of this section in Theorem \ref{maximal-oscil}, we describe a special scenario that if it happens in a family of dynamical systems, then we can conclude the generic existence of \emph{maximally oscillating} dynamics within that family. In the rest of the paper we show that indeed this scenario happens for the rational maps in the maximal bifurcation locus. In Section \ref{sec.proof main theorem} we prove the main theorem of this paper using two propositions \ref{bif-to-per} and \ref{per-meas-density}. In Proposition \ref{bif-to-per} we show that any map in the maximal bifurcation locus statistically bifurcates toward the Dirac mass on an arbitrary periodic measure. This proposition is proved in Section \ref{sec.stat-bif-to-per-pt}. In the last section, Proposition \ref{per-meas-density} is proved in which we show the periodic measures are dense in the set of invariant measures for strictly postcritically finite maps.

\subsection*{Acknowledgment}
This paper is a part of author's PhD thesis which was written under a joint PhD program between Sharif university of technology and Universit\'e Paris 13. The author would very like to thank Pierre Berger for suggestion of this topic and his supervision while working on the problem, and also Meysam Nassiri for useful comments and discussions on this work, as well as his advisership during the PhD program.

\section{Formalization of the concept of statistical bifurcation}
In this section we are going to introduce an abstract setting for dealing with statistical bifurcation of a dynamical system. First let us talk about some basic notations and preliminaries. 

Let $X$ be a compact metric space endowed with a reference Borel probability measure $\mu $. For a compact metric space $(X,d)$, let us denote the space of probability measures on $X$ by $\mathcal M_1(X) $. This space can be endowed with weak-$^*$ topology which is metrizable, for instance with \emph{Wasserstien metric} $d_w$:
$$d_w(\nu_1,\nu_2):=\inf_{\zeta \in \pi(\nu_1,\nu_2)}\int_{X\times X} d(x,y) d\zeta \ ,$$
where $\nu_1$ and $\nu_2$ are two probability measures and $\pi(\nu_1,\nu_2)$ is the set of all probability measures on $X\times X$ which their projections on the first coordinate is equal to $\nu_1$ and on the second coordinate is equal to $\nu_2$. The  Wasserstein distance induce the weak-$^*$ topology on $\mathcal M_1(X)$ and hence the compactness of $(X,d)$ implies that $(\mathcal M_1(x),d_w)$ is a compact and complete metric space. We should note that our results and arguments in the rest of this note hold for any other metric inducing the weak-$^*$ topology on the space of probability measures. \\
For a point $x\in X$ and a map $f:X \rightarrow X$, the \emph{empirical measure} 
\begin{equation*}
 e_{n}^{f}(x):=\frac{1}{n} \sum_{i=0}^{n-1} \delta_{f^i (x)} 
\end{equation*}
describes the distribution of the orbit of $x$ up to the $n^\text{th}$ iteration in the phase space, which asymptotically may or may not converge.For a point $x\in X$ the set of accumulation points of the sequence of empirical measures $\{e^f_n(x)\}_{n}$ is always non-empty. If this sequence is convergent, we denote its limit by $e^f_{\infty}(x)$. In general this sequence may have a large set of accumulation points which we denote by $acc(\{e^f_n(x)\}_{n})$. We recall the following fact:
\begin{remark}
For any $x\in X$ we have $acc(\{e^f_n(x)\}_{n})\subset \mathcal M_1(f).$
\end{remark}

Now we come back to introduce our formalization for the concept of statistical bifurcation. Up to a fixed iteration, different points in the phase space have different empirical measures. We can investigate how the empirical measures $e^f_n(x)$ are distributed in $\mathcal M_1(X)$ with respect to the reference measure $\mu$ on $X$ and what is the asymptotic behavior of these distributions. To this aim consider the map $e^f_n:X\to \mathcal M _1(X)$ which sends each point $x\in X$ to its $n^{th}$ empirical measure, and push forward the measure $\mu$ to the set of probability measures on $X$ using this map: 
$$\hat{e}_n(f):=(e_n^f)_*(\mu).$$ 
The measure $\hat{e}_n(f)$ is a probability measure on the space of probability measures on $X$. We denote the space of probability measures on the space of probability measures by $\mathcal M_1(\mathcal M _1(X))$. We endow this space with the Wasserestein metric. Note that the compactness of $X$ implies the compactness of $\mathcal M_1(X)$ and hence the compactness of $\mathcal M_1(\mathcal M_1(X))$. So the sequence $\{\hat{e}_n(f)\}_{n\in \mathbb{N}}$ lives in a compact space and have one or possibly more than one accumulation points.
\begin{example}
For any $\mu$ preserving map $f:X\to X$, the sequence $\{\hat{e}_n(f)\}_{n\in \mathbb{N}}$ converges to a measure $\hat{\mu}$ which is the ergodic decomposition of the measure $\mu$.
\end{example}
\begin{example}
If $\nu$ is a physical measure for the map $f:X\to X$ whose basin covers $\mu$-almost every point, the sequence $\{\hat{e}_n(f)\}_{n\in \mathbb{N}}$ converges to the Dirac mass concentrated on the point $\mu\in \mathcal M_1(X)$, which we denote by $\delta_\mu$.
\end{example}
\begin{definition}
 We say a map $f$ is non-statistical in law if the sequence $\{\hat{e}_n(f)\}_{n\in \mathbb{N}}$ does not converge.
\end{definition}
Now let $\Lambda$ be a Baire space of self-mappings of $X$ endowed with a topology finer than $C^0$-topology. For each $f\in\Lambda$ the accumulation points of the sequence $\{\hat{e}_n(f)\}_{n\in \mathbb{N}}$ form a compact subset of $\mathcal M_1(\mathcal M_1(X))$ which we denote it by $acc( \{\hat{e}_n(f)\}_{n\in \mathbb{N}})$. This set can vary dramatically by small perturbations of $f$ in $\Lambda$:
\begin{example}\label{identity on S1}
Let $\Lambda$ be the set of rigid rotations on $\mathbb S^1$ and consider the Lebesgue measure as a reference measure. For the identity map $id$ on $\mathbb S^1$, the sequence $\{\hat{e}_n(id)\}_{n\in \mathbb{N}}$ is a constant sequence. Indeed for every $n\in\mathbb N$ we have:
\begin{align}\label{def e^}
\hat{e}_n(id)=\int_{\mathbb S^1}\delta_{\delta_x}dLeb.
\end{align}
So $acc( \{\hat{e}_n(id)\}_{n\in \mathbb{N}})$ is equal to $\{\int_{\mathbb S^1}\delta_{\delta_x}dLeb\}$. But for any irrational rotation $R_\theta$ (arbitrary close to the identity map), the sequence $\{\hat{e}_n(R_\theta)\}_{n\in \mathbb{N}}$ converges to $\delta_{Leb}$ which is a different accumulation point. 
\end{example}

In the previous example, for an irrational rotation $R_\theta$ close to the identity map , the empirical measures of almost every point start to go toward the Lebesgue measure, and hence the sequence $\{\hat{e}_n(R_\theta)\}_{n\in \mathbb{N}}$ goes toward $\delta_{Leb}$. To study the same phenomenon for the other dynamical systems, we propose the following definition. We recall that $\Lambda$ is a Baire space of self-mappings of $X$ endowed with a topology finer than $C^0$-topology and $\mu$ is a reference measure on $X$.
\begin{definition}
 For a map $f\in\Lambda$ and a probability measure $\hat{\nu}\in \mathcal M_1(\mathcal M_1(X))$, we say $f$ \emph{statistically bifurcates toward $\hat{\nu}$ through perturbations in $\Lambda$}, if there is a sequence of maps $\{f_k\}_k $ in $\Lambda$ converging to $f$ and a sequence of natural numbers $\{n_k\}_k$ converging to infinity such that the sequence $\{\hat{e}_{n_k}(f_k)\}_k$ converges to $\hat{\nu}\in\mathcal M_1(\mathcal M_1(X))$. 
\end{definition}

For the sake of simplicity, when the space $\Lambda$ in which we are allowed to perturb our dynamics is fixed, we say $f$ statistically bifurcates toward $\hat{\nu}$.

For any map $f\in\Lambda$, by $\mathcal{B}_{\Lambda,f}$ we denote the set of those measures $\hat{\nu}\in\mathcal M_1(\mathcal M_1(X))$ that $f$ statistically bifurcates toward $\hat{\nu}$ through perturbations in $\Lambda$.
\begin{definition}
 We say a map $f$ is statistically stable in law if the set $\mathcal{B}_{\Lambda,f}$ has only one element. Otherwise we say $f$ is statistically unstable in law.
\end{definition}
\begin{remark}\label{Rem.acc.subset}
By definition, it holds true that $acc(\{\hat{e}_n^f\}_n)\subset \mathcal{B}_{\Lambda,f}.$
\end{remark}

Let us remind some definitions that we need in the rest of this section.
Let $X$ and $Y$ be two topological spaces with $Y$ compact. Denote the set of all compact subsets of $Y$ by $\mathcal K (Y)$. A map $\phi:X\to \mathcal K (Y)$ is called lower semi-continuous if for any $x\in X$ and any $V$ open subset of $Y$ with $\phi(x)\cap V\neq \emptyset$, there is a neighbourhood $U$ of $x$ such that for any $y\in U$ the intersection $\phi(y)\cap V$ is non-empty. The map $\phi$ is called upper semi-continuous if for any $x\in X$ and any $V$ open subset of $Y$ with $\phi(x)\subset V$, there is a neighbourhood $U$ of $x$ such that for any $y\in U$ the set $\phi(y)$ is contained in $V$. And finally $\phi$ is called continuous at $x$ if it is both upper and lower semi-continuous at $x$.
To say $x$ is a continuity point of a set valued map $\phi:X\to \mathcal K (Y)$ with the above definition, is indeed equal to say $x$ is a continuity point of $\phi$ with considering $\mathcal K (Y)$ as a topological space endowed with Hausdorff topology. We recall the following theorem of Fort \cite{fort1951points} which generalizes a theorem related to real valued semi-continuous maps to the case of set valued semi-continuous maps:
\begin{theorem*}[Fort]
For any Baire topological space $X$ and compact topological space $Y$, the set of continuity points of a semi-continuous map from $X$ to $\mathcal K (Y)$ is a Baire generic subset of $X$.
\end{theorem*}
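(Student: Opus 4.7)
Assume $\phi: X \to \mathcal K(Y)$ is upper semi-continuous (the lower semi-continuous case being symmetric). Since $\phi$ is already USC, continuity at a point $x$ is equivalent to lower semi-continuity at $x$: for every open $V \subset Y$ meeting $\phi(x)$ the point $x$ must belong to the interior of $A_V := \{y \in X : \phi(y) \cap V \neq \emptyset\}$. The plan is to show the set of discontinuities is a meager subset of $X$ and then invoke the Baire hypothesis on $X$.

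First I would reduce to a countable family by fixing a countable basis $\{V_n\}_n$ of $Y$ (as all intended applications are metrizable and compact) and exhausting each $V_n$ by an increasing union of closed sets $F_{n,k} \subset V_n$ with $\bigcup_k F_{n,k} = V_n$. Set $A_n := A_{V_n}$ and $A_{n,k} := \{y : \phi(y) \cap F_{n,k} \neq \emptyset\}$. Picking any point of $\phi(x) \cap V_n$ and noting it lies in some $F_{n,k}$ gives $A_n = \bigcup_k A_{n,k}$, so the discontinuity set is
$$D := \bigcup_n \bigl(A_n \setminus \mathrm{int}(A_n)\bigr) = \bigcup_{n,k}\bigl(A_{n,k}\setminus \mathrm{int}(A_n)\bigr).$$

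The key step is to show each $A_{n,k}$ is closed, using upper semi-continuity. For USC maps with compact values into a Hausdorff space, the graph $\mathrm{graph}(\phi) \subset X \times Y$ is closed; since $Y$ is compact, the projection $X \times Y \to X$ is closed, and therefore $A_{n,k}$, being the projection of $\mathrm{graph}(\phi) \cap (X \times F_{n,k})$, is closed in $X$. Consequently $A_{n,k} \setminus \mathrm{int}(A_n)$ is closed, and it has empty interior since any open $U \subset A_{n,k}$ lies in $A_n$, hence in $\mathrm{int}(A_n)$, and so cannot meet the complement of $\mathrm{int}(A_n)$. Thus each term is nowhere dense, $D$ is meager, and its complement, the set of continuity points, is comeager in the Baire space $X$.

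The main obstacle --- essentially the only nontrivial point --- is deriving closedness of $A_{n,k}$ from upper semi-continuity, which is what forces the closed-graph argument together with the compactness of $Y$. The lower semi-continuous case is handled dually by exhausting each basic open $V_n$ from within by closed sets $\overline{W_{n,k}} \subset V_n$ and using that $\{y : \phi(y) \subset \overline{W_{n,k}}\}$ is closed for LSC $\phi$ (its complement equals $A_{Y \setminus \overline{W_{n,k}}}$, which is open by LSC), then running the same meager-set bookkeeping on the failure of USC.
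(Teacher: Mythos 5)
The paper cites Fort's theorem from the original 1951 paper without reproducing a proof, so there is no in-paper argument to compare against; your proposal supplies one, and it is the standard Baire-category argument, correctly carried out. The decomposition of the discontinuity set as $\bigcup_{n,k}\bigl(A_{n,k}\setminus \mathrm{int}(A_n)\bigr)$, the closedness of each $A_{n,k}$, the empty-interior observation, and the invocation of Baireness of $X$ all go through. One remark: closedness of $A_{n,k}$ can be obtained more directly than via the closed-graph-plus-compact-projection route --- if $\phi(x)\cap F_{n,k}=\emptyset$ then $\phi(x)\subset Y\setminus F_{n,k}$, and upper semi-continuity produces a neighbourhood of $x$ disjoint from $A_{n,k}$ --- but your version is also correct.

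Two small technical points are worth flagging. First, your reduction to a countable family uses a countable basis of $Y$; the statement as given only says ``compact topological space,'' but in the paper's only use $Y=\mathcal M_1(\mathcal M_1(X))$ is compact metric, so second countability is automatic and the restriction costs nothing. Second, in the lower-semi-continuous case (showing that failure of upper semi-continuity is meager) the reduction to a countable family needs finite unions of basic opens, not just single basis elements: if $\phi(x)\subset V$, compactness of $\phi(x)$ gives finitely many basics covering $\phi(x)$ inside $V$, and one must test upper semi-continuity against their union $W$, using $B_W\subset B_V$ to conclude. The cure is to enlarge the countable basis to be closed under finite unions before running the bookkeeping; after that, the dual exhaustion by $\overline{W_{n,k}}$ and the closedness of $\{y:\phi(y)\subset\overline{W_{n,k}}\}$ via lower semi-continuity work exactly as you sketch.
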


Now we study the properties of the map sending $f$ to the set $\mathcal B_{\Lambda,f}$:
\begin{lemma}\label{compact BLambda}
The set $\mathcal{B}_{\Lambda,f}$ is a compact subset of $\mathcal M_1(\mathcal M_1(X))$. 
\end{lemma}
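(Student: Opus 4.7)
The plan is to show $\mathcal{B}_{\Lambda,f}$ is closed inside the compact space $\mathcal{M}_1(\mathcal{M}_1(X))$; compactness then follows immediately. Recall that $\mathcal{M}_1(\mathcal{M}_1(X))$ is compact because $X$ is compact, which forces $\mathcal{M}_1(X)$ to be compact in the weak-$^*$ topology, and the same then applies one level higher.

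To prove closedness, I would take a convergent sequence $\hat{\nu}_j \to \hat{\nu}$ with each $\hat{\nu}_j \in \mathcal{B}_{\Lambda,f}$ and produce sequences witnessing $\hat{\nu} \in \mathcal{B}_{\Lambda,f}$ by a standard diagonal extraction. For every $j$, the definition of $\mathcal{B}_{\Lambda,f}$ gives sequences $\{f_{k,j}\}_k \subset \Lambda$ with $f_{k,j} \to f$ as $k\to\infty$, and natural numbers $n_{k,j} \to \infty$, such that $\hat{e}_{n_{k,j}}(f_{k,j}) \to \hat{\nu}_j$ in $\mathcal{M}_1(\mathcal{M}_1(X))$. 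Fix a metric $D$ compatible with the topology of $\Lambda$ (which is finer than $C^0$) and the Wasserstein metric $d_W$ on $\mathcal{M}_1(\mathcal{M}_1(X))$. For each $j$, choose $k_j$ large enough that
\[
D(f_{k_j,j}, f) < \tfrac{1}{j}, \qquad n_{k_j,j} > j, \qquad d_W\bigl(\hat{e}_{n_{k_j,j}}(f_{k_j,j}), \hat{\nu}_j\bigr) < \tfrac{1}{j}.
\]

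Set $g_j := f_{k_j,j}$ and $m_j := n_{k_j,j}$. Then $g_j \to f$ in $\Lambda$, $m_j \to \infty$, and the triangle inequality together with $\hat{\nu}_j \to \hat{\nu}$ gives
\[
d_W\bigl(\hat{e}_{m_j}(g_j), \hat{\nu}\bigr) \leq d_W\bigl(\hat{e}_{m_j}(g_j), \hat{\nu}_j\bigr) + d_W(\hat{\nu}_j, \hat{\nu}) \xrightarrow[j\to\infty]{} 0.
\]
Hence $\hat{e}_{m_j}(g_j) \to \hat{\nu}$, so by definition $\hat{\nu} \in \mathcal{B}_{\Lambda,f}$, and $\mathcal{B}_{\Lambda,f}$ is closed, hence compact.

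There is essentially no obstacle: the argument is a clean Cantor-style diagonalization made possible by the fact that both the ambient space of parameters and the target space $\mathcal{M}_1(\mathcal{M}_1(X))$ are metrizable. The only point to keep in mind is that one should fix metrics in advance so that the three conditions on $k_j$ can be imposed simultaneously; the topology on $\Lambda$ being finer than $C^0$ plays no special role beyond being metrizable (or at least first-countable), which is implicit in the preceding setup.
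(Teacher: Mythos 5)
Your proof is correct and takes exactly the route the paper does: show $\mathcal{B}_{\Lambda,f}$ is closed inside the compact space $\mathcal{M}_1(\mathcal{M}_1(X))$. The paper simply asserts closedness is ``clear by definition,'' whereas you spell out the diagonal extraction that justifies it; this is a detail expansion, not a different argument.
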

\begin{proof}
By definition it is clear that the set $\mathcal{B}_{\Lambda,f}$ is closed. The compactness is a consequence of compactness of $\mathcal M_1(\mathcal M_1(X))$.
\end{proof}
Fixing a set of dynamics $\Lambda$, recall that by Lemma \ref{compact BLambda} the set $\mathcal B_{\Lambda,f}$ is a compact set. We can ask about dependence of the set $\mathcal B_{\Lambda,f}$ on the map $f$. The following lemma shows that this dependency is semi-continuous:
\begin{lemma}\label{lemma.BLambda.usc}
The map sending $f\in\Lambda $ to the set $\mathcal B_{\Lambda, f}$ is upper semi-continuous.
\end{lemma}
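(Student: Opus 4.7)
My plan is to deduce upper semi-continuity from the sequentially closed graph property, which is equivalent to upper semi-continuity for compact-valued maps into a compact metric space (here $\mathcal M_1(\mathcal M_1(X))$). By Lemma \ref{compact BLambda} the values $\mathcal B_{\Lambda,f}$ are indeed compact, so it will suffice to verify the following: for every sequence $f_k\to f$ in $\Lambda$ and every $\hat\nu_k\in \mathcal B_{\Lambda,f_k}$ with $\hat\nu_k\to \hat\nu$, one has $\hat\nu\in \mathcal B_{\Lambda,f}$.

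To check this closed graph condition, I would run a standard diagonal argument. By the definition of $\mathcal B_{\Lambda,f_k}$, for each $k$ I can pick sequences $\{g_{k,j}\}_j\subset \Lambda$ with $g_{k,j}\to f_k$ and integers $n_{k,j}\to\infty$ such that $\hat e_{n_{k,j}}(g_{k,j})\to \hat\nu_k$ in $\mathcal M_1(\mathcal M_1(X))$. Fixing a metric $d_\Lambda$ compatible with the topology of $\Lambda$ (which is metrizable in every application of interest, most notably $\Lambda=\mathrm{Rat}_d$), I would select an index $j_k$ for each $k$ large enough that
\[ d_\Lambda(g_{k,j_k},f_k)<\tfrac{1}{k},\qquad n_{k,j_k}\geq k,\qquad d_w\bigl(\hat e_{n_{k,j_k}}(g_{k,j_k}),\hat\nu_k\bigr)<\tfrac{1}{k}. \]
Then $g_{k,j_k}\to f$ in $\Lambda$, $n_{k,j_k}\to\infty$, and a triangle inequality in $d_w$ yields $\hat e_{n_{k,j_k}}(g_{k,j_k})\to \hat\nu$, so $\hat\nu\in \mathcal B_{\Lambda,f}$ as required.

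In a non-metrizable setting the same argument can be rephrased with nets, or directly via open neighborhoods: if upper semi-continuity failed at $f$, one would fix an open $V\supset \mathcal B_{\Lambda,f}$ and a net $g_\alpha\to f$ with $\mathcal B_{\Lambda,g_\alpha}\not\subset V$, pick $\hat\nu_\alpha\in \mathcal B_{\Lambda,g_\alpha}\setminus V$, extract a convergent subnet using compactness of the closed set $V^c$ in $\mathcal M_1(\mathcal M_1(X))$, and apply the same diagonal extraction to produce an element of $\mathcal B_{\Lambda,f}\setminus V$, contradicting $\mathcal B_{\Lambda,f}\subset V$.

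I do not anticipate a genuine obstacle here. Conceptually $\mathcal B_{\Lambda,f}$ is a ``$\limsup$'' of the empirical distributions $\hat e_n(g)$ as $(g,n)\to(f,\infty)$, and such upper-limit constructions in a compact ambient space are automatically upper semi-continuous in the base point. The only point requiring care is the simultaneous control of the two perturbation parameters (the inner one producing $\hat\nu_k$ from $f_k$, and the outer one $f_k\to f$), which is exactly what the diagonal choice of $j_k$ is designed to handle.
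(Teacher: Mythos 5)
Your proof is correct and follows essentially the same route as the paper's: the paper also reduces to the sequential closed-graph condition (verbatim ``we need to prove that if... $f_n$ statistically bifurcates toward $\hat\nu_n$... then $f$ statistically bifurcates toward $\hat\nu$'') and finishes with the same diagonal observation that for large $n$, small perturbations of $f_n$ are small perturbations of $f$; you have simply made the diagonal extraction of $(g_{k,j_k},n_{k,j_k})$ explicit.
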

\begin{proof}
Let $\{f_n\}_{n }$ be a sequence converging to $f\in \Lambda$. We need to prove that if for each $n\in \mathbb N$, the map $f_n$ statistically bifurcates toward a measure $\hat{\nu}_n\in \mathcal M_1(\mathcal M_1(X))$ through perturbations in $\Lambda$ and the sequence $\{\hat{\nu}_n\}_n$ is convergent to a measure $\hat{\nu}$, then the map $f$ statistically bifurcates toward $\hat{\nu}$ through perturbations in $\Lambda$. 
So the proof is finished by observing that for $n$ large enough, small perturbations of the map $f_n$ are small perturbations of the map $f$, and $\hat{\nu}_n$ is close to $\hat{\nu}$.
\end{proof}

To each map $f\in\Lambda$, one can associate the set of accumulation points of the sequence $\{\hat{e}_n(f)\}_{n\in \mathbb{N}}$ which is a compact subset of $\mathcal M_1 (\mathcal M _1(X))$. By looking more carefully at the Example \ref{identity on S1}, we see that this map is neither upper semi-continuous nor lower semi-continuous. However if we add all of the points of this sequence except finite ones, to its accumulation points, we obtain a semi-continuous map:
\begin{lemma}\label{E lsc}
The map $\mathcal E_k: \Lambda \to \mathcal K ({\mathcal M_1(\mathcal M_1(X))})$ defined as 
$$\mathcal E_k(f):=\overline{\{\hat{e}_n(f)|n\in \mathbb{N},n>k\}},$$
is lower semi-continuous.
\end{lemma}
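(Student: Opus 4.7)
The plan is to prove lower semi-continuity directly from the definition, by showing that for any fixed $n$ the map $f \mapsto \hat{e}_n(f)$ is itself continuous, and then noting that any element of $\mathcal E_k(f)$ can be approximated by such maps.

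First I would reduce the statement. Suppose $V \subset \mathcal M_1(\mathcal M_1(X))$ is open with $\mathcal E_k(f) \cap V \neq \emptyset$. Since $\mathcal E_k(f)$ is the closure of $\{\hat{e}_n(f) : n > k\}$ and $V$ is open, there must exist some $n_0 > k$ with $\hat{e}_{n_0}(f) \in V$. So it suffices to prove: for every fixed $n_0 \in \mathbb N$, the map $\Phi_{n_0}: \Lambda \to \mathcal M_1(\mathcal M_1(X))$ sending $g$ to $\hat{e}_{n_0}(g)$ is continuous at $f$. Once this is established, continuity gives a neighborhood $U$ of $f$ on which $\Phi_{n_0}(g) \in V$, and since $\Phi_{n_0}(g) \in \mathcal E_k(g)$, this yields $\mathcal E_k(g) \cap V \neq \emptyset$ on $U$.

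Next I would prove the continuity of $\Phi_{n_0}$. Since the topology on $\Lambda$ is finer than the $C^0$-topology, it suffices to check continuity with respect to uniform convergence. If $g_m \to f$ uniformly on the compact metric space $X$, then using equicontinuity of $\{g_m\}$ (a standard consequence of uniform convergence to a uniformly continuous limit on a compact space) one shows inductively that $g_m^i \to f^i$ uniformly for each $i \leq n_0 - 1$. In particular, for every $x \in X$ the empirical measure
$$e_{n_0}^{g_m}(x) = \frac{1}{n_0}\sum_{i=0}^{n_0 - 1} \delta_{g_m^i(x)}$$
converges weakly, and uniformly in $x$, to $e_{n_0}^f(x)$.

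Finally I would upgrade this to weak convergence of the pushforwards $\hat{e}_{n_0}(g_m) = (e_{n_0}^{g_m})_*\mu$ toward $\hat{e}_{n_0}(f) = (e_{n_0}^f)_*\mu$. For any bounded continuous test function $\varphi: \mathcal M_1(X) \to \mathbb R$, one has
$$\int \varphi \, d\hat{e}_{n_0}(g_m) = \int_X \varphi\bigl(e_{n_0}^{g_m}(x)\bigr)\, d\mu(x).$$
Pointwise convergence $e_{n_0}^{g_m}(x) \to e_{n_0}^f(x)$ combined with continuity of $\varphi$ gives $\varphi(e_{n_0}^{g_m}(x)) \to \varphi(e_{n_0}^f(x))$ for every $x$, and since $\varphi$ is bounded on the compact space $\mathcal M_1(X)$, dominated convergence yields the desired convergence of integrals. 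This establishes $\Phi_{n_0}(g_m) \to \Phi_{n_0}(f)$ in $\mathcal M_1(\mathcal M_1(X))$, completing the proof.

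The only mildly technical step is the propagation of uniform convergence to iterates, but this is routine given compactness of $X$; the rest is essentially a packaging of the definitions.
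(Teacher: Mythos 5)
Your proof is correct and follows the same route as the paper: reduce lower semi-continuity to continuity of $f\mapsto\hat e_{n}(f)$ for fixed $n$, which the paper asserts without proof and you verify directly. The extra details you supply (propagating uniform convergence to iterates via uniform continuity of $f$, then pushing forward with dominated convergence) are a valid justification of the step the paper leaves implicit.
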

\begin{proof}
Let $V$ be an open subset of $\mathcal M_1(\mathcal M_1(X))$ intersecting $\mathcal E_k (f)$. So there is $n\in \mathbb N$ such that $\hat{e}_n(f)\in V$. But note that the map $f\mapsto\ \hat{e}_n(f)$ is continuous and so there is a neighborhood $U$ of $f$ so that for any $g\in U$, we have $\hat{e}_n(g)\in V$ and so $\mathcal E_k (g)$ intersects the set $V$. This shows that $\mathcal E_k$ is lower semi-continuous.
\end{proof}

The following lemma is an interesting consequence of lemma \ref{E lsc} which shows how the set $\mathcal{E}_k(f)$ depends on the dynamics $f$. 

\begin{lemma}\label{cor of Fort}
For any $k\in\mathbb{N}$ the set of continuity points of the map $\mathcal E_k$ is a Baire generic subset of $\Lambda$ .
\end{lemma}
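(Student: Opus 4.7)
The plan is to deduce this lemma as an immediate consequence of Fort's theorem as stated in the excerpt. All the required hypotheses are already in place: $\Lambda$ is a Baire space by assumption, the target $\mathcal M_1(\mathcal M_1(X))$ is compact (as observed earlier, compactness of $X$ yields compactness of $\mathcal M_1(X)$ and hence of $\mathcal M_1(\mathcal M_1(X))$), and the set-valued map $\mathcal E_k : \Lambda \to \mathcal K(\mathcal M_1(\mathcal M_1(X)))$ is lower semi-continuous by Lemma \ref{E lsc}. Fort's theorem applies verbatim and concludes that the continuity points of $\mathcal E_k$ form a Baire generic subset of $\Lambda$.

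Concretely, the steps I would carry out are: (i) record that $\Lambda$ is Baire and that $\mathcal E_k$ takes values in compact subsets of the compact space $Y := \mathcal M_1(\mathcal M_1(X))$, so that $\mathcal K(Y)$ equipped with the Hausdorff topology is the appropriate codomain; (ii) invoke Lemma \ref{E lsc} to obtain that $\mathcal E_k$ is lower semi-continuous; (iii) quote Fort's theorem with $X \leadsto \Lambda$, $Y \leadsto \mathcal M_1(\mathcal M_1(X))$, and $\phi \leadsto \mathcal E_k$, which yields that the set of continuity points of $\mathcal E_k$ is Baire generic in $\Lambda$.

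Since Fort's theorem does all of the real work, I do not foresee any genuine obstacle; the only thing to be careful about is that the notion of continuity used in the statement of Fort's theorem coincides with the Hausdorff-continuity of the set-valued map, but this is precisely the identification recalled in the paragraph preceding Fort's theorem. Thus the proof reduces to a single citation, and I would write it as a two-sentence argument pointing to Lemma \ref{E lsc} and Fort's theorem.
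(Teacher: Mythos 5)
Your proposal is correct and is essentially the paper's own proof: the paper also deduces the lemma directly from Lemma~\ref{E lsc} together with Fort's theorem. You merely spell out the verification of Fort's hypotheses in slightly more detail than the paper's one-line proof.
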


This lemma gives a view to the statistical behaviors of generic maps in any Baire space of dynamics: for a generic map, the statistical behavior that can be observed for times close to infinity can not be changed dramatically by small perturbations.  
\begin{proof}
Using Lemma \ref{E lsc}, this is a direct consequence of Fort's theorem. 
\end{proof}
\begin{lemma}\label{lem.B.subset}
For a Baire generic map $f\in \Lambda$ it holds true that 
$\mathcal B_{\Lambda,f}\subset \mathcal E_k (f).$
\end{lemma}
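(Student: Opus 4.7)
The plan is to exploit Lemma \ref{cor of Fort}, which already identifies a Baire generic set $G \subset \Lambda$ of continuity points of the set-valued map $\mathcal E_k$. I claim that this $G$ is the generic set we want: for every $f \in G$, the inclusion $\mathcal B_{\Lambda,f} \subset \mathcal E_k(f)$ holds.

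First I would fix $f \in G$ and an arbitrary $\hat\nu \in \mathcal B_{\Lambda,f}$. By the definition of statistical bifurcation, there exist a sequence $\{f_j\}_j \subset \Lambda$ with $f_j \to f$ and integers $n_j \to \infty$ such that $\hat{e}_{n_j}(f_j) \to \hat\nu$ in $\mathcal M_1(\mathcal M_1(X))$. The key elementary observation is that for all $j$ large enough one has $n_j > k$, hence $\hat{e}_{n_j}(f_j) \in \mathcal E_k(f_j)$.

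Next I would use the upper semi-continuity of $\mathcal E_k$ at $f$; this is where $f \in G$ is decisive, since Lemma \ref{E lsc} only gives lower semi-continuity a priori, while Fort's theorem (quoted before Lemma \ref{cor of Fort}) produces a generic set at which \emph{both} directions of semi-continuity hold. Suppose towards a contradiction that $\hat\nu \notin \mathcal E_k(f)$. Since $\mathcal E_k(f)$ is a compact subset of the metric space $\mathcal M_1(\mathcal M_1(X))$, we can separate it from $\hat\nu$ by disjoint open sets $V \supset \mathcal E_k(f)$ and $W \ni \hat\nu$. Upper semi-continuity at $f$ yields a neighbourhood $U$ of $f$ with $\mathcal E_k(g) \subset V$ for every $g \in U$. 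For $j$ sufficiently large, $f_j \in U$, so $\hat{e}_{n_j}(f_j) \in V$, which is incompatible with $\hat{e}_{n_j}(f_j) \to \hat\nu \in W$ and $V \cap W = \emptyset$.

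The main (and only) obstacle is to notice that the statement is essentially a tautology once one has Fort's theorem available: the definition of $\mathcal B_{\Lambda,f}$ packages exactly the type of perturbative accumulation that is controlled by upper semi-continuity of $\mathcal E_k$. There are no estimates to perform, and no auxiliary construction is needed; the entire content is to pass from lower semi-continuity of $\mathcal E_k$ (Lemma \ref{E lsc}) to two-sided continuity on a generic set via Fort, and then read off the inclusion.
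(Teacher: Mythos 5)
Your argument is correct and is essentially the paper's own proof: both rest on Lemma \ref{cor of Fort} (Fort's theorem applied to $\mathcal E_k$) to obtain upper semi-continuity of $\mathcal E_k$ at a generic $f$, and then read off the inclusion directly from the definition of $\mathcal B_{\Lambda,f}$. You merely unpack the paper's one-line $\limsup_{g\to f}\mathcal E_k(g)=\mathcal E_k(f)$ statement into an explicit separation argument.
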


\begin{proof}
Observe that by Lemma \ref{cor of Fort}, for a generic map $f\in\Lambda$ we have 
$$\limsup_{g\to f} \mathcal{E}_k(g)=\mathcal{E}_k(f).$$
On the other hand according to the definition of $\mathcal B_{\Lambda,f}$ for any $k\in\mathbb{N}$  
$$\mathcal B_{\Lambda,f} \subset \limsup_{g\to f} \mathcal{E}_k(g),$$
and this finishes the proof.
\end{proof}

The following theorem reveals how two notions of statistical instability in law and being non-statistical in law are connected. 

\begin{theorem}\label{thm.generic BLambda}
Baire generically, $\mathcal B_{\Lambda,f}$ is equal to $acc(\{\hat{e}_n^f\}_n)$.
\end{theorem}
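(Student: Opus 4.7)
The plan is to combine Remark \ref{Rem.acc.subset} with Lemma \ref{lem.B.subset} via a countable-intersection Baire argument. Recall that Remark \ref{Rem.acc.subset} already gives the easy inclusion $acc(\{\hat e_n^f\}_n)\subset \mathcal B_{\Lambda,f}$ for \emph{every} $f\in\Lambda$, so only the reverse inclusion needs to be established on a generic set.

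The key observation is the identity
\[
acc(\{\hat e_n(f)\}_n)=\bigcap_{k\in\mathbb N}\mathcal E_k(f),
\]
which is immediate from the definition of $\mathcal E_k$ as the closure of the tail of the sequence beyond index $k$: a point is an accumulation point of $\{\hat e_n(f)\}_n$ iff it lies in every such tail closure. Thus, if one can arrange $\mathcal B_{\Lambda,f}\subset \mathcal E_k(f)$ simultaneously for all $k$, the intersection yields $\mathcal B_{\Lambda,f}\subset acc(\{\hat e_n^f\}_n)$, which is the desired inclusion.

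Concretely, for each fixed $k\in\mathbb N$, Lemma \ref{lem.B.subset} furnishes a Baire generic subset $G_k\subset\Lambda$ on which $\mathcal B_{\Lambda,f}\subset \mathcal E_k(f)$. I would then set
\[
G:=\bigcap_{k\in\mathbb N} G_k.
\]
Since $\Lambda$ is a Baire space and a countable intersection of dense $G_\delta$ sets is again a dense $G_\delta$ set, $G$ is Baire generic in $\Lambda$. For any $f\in G$ we obtain
\[
\mathcal B_{\Lambda,f}\;\subset\;\bigcap_{k\in\mathbb N}\mathcal E_k(f)\;=\;acc(\{\hat e_n(f)\}_n),
\]
and combining this with the reverse inclusion from Remark \ref{Rem.acc.subset} gives equality on $G$.

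There is no real obstacle beyond bookkeeping: the work has already been done in Lemmas \ref{E lsc}--\ref{lem.B.subset} via Fort's theorem. The only point that merits a brief justification in the written proof is the identity $acc(\{\hat e_n(f)\}_n)=\bigcap_k\mathcal E_k(f)$, which is a standard metric-space fact about accumulation points of a sequence in the compact metric space $\mathcal M_1(\mathcal M_1(X))$, and the preservation of Baire genericity under countable intersection, which is immediate in a Baire space once one records that the sets $G_k$ from Lemma \ref{lem.B.subset} can be taken to be dense $G_\delta$.
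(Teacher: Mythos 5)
Your proposal is correct and follows essentially the same route as the paper's proof: both use the identity $acc(\{\hat{e}_n(f)\}_n)=\bigcap_{k}\mathcal E_k(f)$, invoke Lemma \ref{lem.B.subset} for each $k$ to get a generic set $G_k$, intersect over $k$ to keep genericity, and finish with the easy inclusion from Remark \ref{Rem.acc.subset}. The only difference is that you spell out the intermediate sets $G_k$ and their intersection explicitly, whereas the paper states the same argument more compactly.
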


 \begin{proof}
First note that $acc(\{\hat{e}_n^f\}_n)=\bigcap_{k\in\mathbb{N}} \mathcal E_k$. By lemma \ref{lem.B.subset} the set $\mathcal B_{\Lambda,f}$ is included in $\mathcal {E}_k$ for generic $f\in \Lambda$ and hence as the intersection of countably many generic set is generic, for a generic map $f$ it holds true that 
$$\mathcal B_{\Lambda,f}\subset \bigcap_{k\in\mathbb{N}} \mathcal E_k=acc(\{\hat{e}_n^f\}_n).$$

The other side of the inclusion $acc(\{\hat{e}_n^f\}_n)\subset \mathcal B_{\Lambda,f}$ follows from the definition. 
 \end{proof}

This short and simple proof was suggested by Pierre Berger. There is also a different proof of this theorem in the PhD thesis of the author \cite{talebi20}.

Now note that if we have any information about the set $\mathcal B_{\Lambda, f}$ then by using theorem \ref{thm.generic BLambda} we may translate it to some information about $acc(\{e^f_n(x)\}_n)$ for generic $f\in \Lambda$. In particular we obtain the following corollary:
\begin{corollary}\label{cor.gen-instability=non-stat}
The set $\Lambda$ contains a Baire generic subset of maps that are statistically unstable in law iff it contains a Baire generic subset of maps which are non-statistical in law. 
 \end{corollary}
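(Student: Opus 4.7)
The plan is to deduce this corollary almost immediately from Theorem~\ref{thm.generic BLambda} by unfolding the definitions of \emph{statistically unstable in law} and \emph{non-statistical in law} in terms of cardinalities of the sets $\mathcal{B}_{\Lambda,f}$ and $acc(\{\hat{e}_n^f\}_n)$, respectively. By definition, $f$ is statistically unstable in law iff $|\mathcal{B}_{\Lambda,f}| \geq 2$, and $f$ is non-statistical in law iff $|acc(\{\hat{e}_n^f\}_n)| \geq 2$. Theorem~\ref{thm.generic BLambda} supplies a Baire generic subset $G \subset \Lambda$ on which $\mathcal{B}_{\Lambda,f} = acc(\{\hat{e}_n^f\}_n)$, so on $G$ the two notions coincide pointwise.

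For the forward implication, suppose $\Lambda$ contains a Baire generic subset $H$ of statistically unstable maps. Since $\Lambda$ is Baire and the intersection of two Baire generic subsets is Baire generic, the set $H \cap G$ is a Baire generic subset of $\Lambda$. For every $f \in H \cap G$ we have $|acc(\{\hat{e}_n^f\}_n)| = |\mathcal{B}_{\Lambda,f}| \geq 2$, so $f$ is non-statistical in law. Hence $H \cap G$ is a Baire generic set of non-statistical maps in law. The reverse implication is entirely symmetric: if $H'$ is a Baire generic subset of non-statistical maps in law, then $H' \cap G$ is Baire generic and every map therein satisfies $|\mathcal{B}_{\Lambda,f}| = |acc(\{\hat{e}_n^f\}_n)| \geq 2$, hence is statistically unstable in law.

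There is really no substantive obstacle here, since Theorem~\ref{thm.generic BLambda} has already done the heavy lifting; the only thing to watch out for is that one must invoke the Baire hypothesis on $\Lambda$ in order to guarantee that the intersection of two Baire generic subsets is still Baire generic (and in particular non-empty and large), which is precisely what allows the transfer between the two notions on a common generic set.
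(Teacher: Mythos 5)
Your proof is correct and is essentially the argument the paper intends: the paper presents the corollary as an immediate consequence of Theorem~\ref{thm.generic BLambda} without writing out the details, and those details are exactly your observation that both ``statistically unstable in law'' and ``non-statistical in law'' translate to the relevant set having at least two elements (the latter because a sequence in the compact metric space $\mathcal M_1(\mathcal M_1(X))$ converges iff it has a unique accumulation point), together with intersecting the hypothesized generic set with the generic set $G$ on which $\mathcal{B}_{\Lambda,f}=acc(\{\hat{e}_n^f\}_n)$.
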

In the following, we explain a scenario through which this lemma can be used to deduce information about the behavior of generic maps. This scenario is indeed what happens in the example of non-statistical rational maps we introduce in this paper.\\

Suppose the initial map $f\in \Lambda$ has an invariant measure $\nu$ such that by a small perturbation of the map, the empirical measures of arbitrary large subset of points is close to $\nu$ for an iteration close to infinity. For instance you can think of the identity map on $\mathbb{S}^1$ which can be perturbed to an irrational rotation for which the empirical measures of almost every point converges to the Lebesgue measure or it can be perturbed to a Morse-Smale map having one attracting fixed point and so the empirical measures of almost every point converges to the Dirac mass on that attracting fixed point. 
 These measures could be interpreted as potential physical measures with full basin for our initial dynamics. We denote this set of measures by $\mathcal M_{\Lambda,f}$ which are defined more precisely as bellow:
$$\mathcal M_{\Lambda, f}:=\{\nu\in \mathcal M_1(X)|\delta_{\nu}\in\mathcal B_{\Lambda,f}\}.$$
\begin{theorem}\label{generic stat behav}
Let $\Lambda$ be a Baire space of self-mappings of $X$ endowed with a topology finer than $C^0$-topology. For a Baire generic map $f\in\Lambda$ the empirical measures of $\mu$ almost every point $x\in X$, accumulates to each measure in $\mathcal M _{\Lambda,f}$ or in other words:
\begin{align}\label{genericity of large accumulation}
for\ \mu-a.e. \ x\in X,\quad \mathcal M_{\Lambda,f}\subset acc(\{e^f_n(x)\}_{n }).
\end{align}
\end{theorem}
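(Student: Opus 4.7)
The plan is to transfer the ``macroscopic'' statistical information encoded in $\mathcal B_{\Lambda,f}$ to the ``pointwise'' behavior of empirical measures, finishing with a separability argument. First, I would apply Theorem \ref{thm.generic BLambda}: for a Baire generic $f\in\Lambda$ one has $\mathcal B_{\Lambda,f}=acc(\{\hat e_n(f)\}_n)$. Fix such an $f$. By the very definition of $\mathcal M_{\Lambda,f}$, for every $\nu\in\mathcal M_{\Lambda,f}$ the Dirac mass $\delta_\nu\in\mathcal M_1(\mathcal M_1(X))$ belongs to $acc(\{\hat e_n(f)\}_n)$, so there exists a subsequence $n_k=n_k(\nu)\to\infty$ with $\hat e_{n_k}(f)\to\delta_\nu$ in the weak-$^*$ topology of $\mathcal M_1(\mathcal M_1(X))$.

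Next, I would translate $\hat e_{n_k}(f)\to\delta_\nu$ into convergence in $\mu$-probability of $e^f_{n_k}(\cdot)$ toward $\nu$. For any $\varepsilon>0$, the map $\phi_\varepsilon(\sigma):=\min(d_w(\sigma,\nu)/\varepsilon,1)$ is continuous and bounded on $\mathcal M_1(X)$, hence
$$\int_X \phi_\varepsilon(e^f_{n_k}(x))\,d\mu(x)=\int_{\mathcal M_1(X)} \phi_\varepsilon\,d\hat e_{n_k}(f)\longrightarrow \phi_\varepsilon(\nu)=0,$$
which forces $\mu\bigl(\{x:d_w(e^f_{n_k}(x),\nu)>\varepsilon\}\bigr)\to 0$. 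A classical diagonal extraction then yields a subsubsequence $m_l=m_l(\nu)$ of $n_k(\nu)$ along which $e^f_{m_l}(x)\to\nu$ for every $x$ outside a $\mu$-null set $N_\nu\subset X$.

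To handle all $\nu\in\mathcal M_{\Lambda,f}$ at once I would invoke separability. Since $X$ is compact metric, $\mathcal M_1(X)$ is separable, so one can pick a countable set $D=\{\nu_j\}_{j\in\mathbb N}\subset\mathcal M_{\Lambda,f}$ dense in $\mathcal M_{\Lambda,f}$. Set $N:=\bigcup_{j}N_{\nu_j}$, still of $\mu$-measure zero. For any $x\notin N$ and every $j\in\mathbb N$ we have $\nu_j\in acc(\{e^f_n(x)\}_n)$; since the set of accumulation points of a sequence is closed in $\mathcal M_1(X)$ and $D$ is dense in $\mathcal M_{\Lambda,f}$, I would conclude $\mathcal M_{\Lambda,f}\subset acc(\{e^f_n(x)\}_n)$ for $\mu$-a.e.\ $x$.

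I expect the main delicate step to be the bridge from the convergence $\hat e_{n_k}(f)\to\delta_\nu$ on $\mathcal M_1(\mathcal M_1(X))$ to pointwise $\mu$-a.e.\ convergence of $e^f_{n_k}(x)\to\nu$; it rests on the standard equivalence between weak-$^*$ convergence to a Dirac mass and convergence in probability, combined with the extraction of an almost sure subsequence. Once that translation is in place, the separability/countable union step is routine, and no additional generic condition beyond the one provided by Theorem \ref{thm.generic BLambda} is needed.
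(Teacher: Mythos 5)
Your proof is correct and follows essentially the same route as the paper: invoke Theorem \ref{thm.generic BLambda} to pass to a Baire generic $f$ where $\mathcal B_{\Lambda,f}=acc(\{\hat e_n(f)\}_n)$, translate $\hat e_{n_k}(f)\to\delta_\nu$ into convergence of $e^f_{n_k}(\cdot)$ to $\nu$ in $\mu$-probability, and then pass to pointwise accumulation. You are more careful than the paper's own argument on two points it leaves implicit: you extract a subsubsequence so that convergence in probability becomes a.e.\ convergence (the paper's assertion that $\delta_{e^f_{n_i}(x)}(U)\to 1$ a.e.\ follows only in $\limsup$ form, or after such an extraction), and you make explicit the separability step needed to pass from one fixed $\nu$ to all of $\mathcal M_{\Lambda,f}$, using a countable dense subset and the closedness of $acc(\{e^f_n(x)\}_n)$.
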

\begin{proof}
To prove the theorem it suffices to show that if $f\in \Lambda$ is a continuity point of the map $\mathcal E_1$ it satisfies condition \eqref{genericity of large accumulation}. Indeed, by Corollary \ref{cor of Fort} the continuity points of the map $\mathcal E_1$ form a Baire generic subset of $\Lambda$.  

Take any measure $\nu$ inside $\mathcal M_{\Lambda,f}$. Theorem
\ref{thm.generic BLambda} implies that $\delta_\nu\in \mathcal E_1(f)$. Now there are two possibilities, either there is a number $n\in \mathbb N$ such that $\hat{e}_n(f)=\delta_\nu$ or not. If not, there is a sequence $\{n_i\}_{i }$ converging to infinity such that 
$$\lim_{i\to\infty}\hat{e}_{n_i}(f)=\delta_\nu.$$ In this case for a small neighbourhood $U\subset\mathcal M_1(X)$ of $\nu$, we have:
$$\lim_{i\to \infty} \hat{e}_{n_i}(f)(U)=\delta_\nu(U)=1.$$
 Noting that we can write $\hat{e}_n(f)$ as bellow
 \begin{equation}\label{equ def e hat}
     \hat{e}_{n_i}(f)=(\int_{X}\delta_{e_{n_i}^f(x)}d\mu),
 \end{equation}
 we obtain
$$\lim_{i\to \infty}(\int_{X}\delta_{e_{n_i}^f(x)}d\mu)(U)=1.$$
So for $\mu$-almost every point $x\in X$ we have:
$$ \lim_{i\to\infty}\delta_{e_{n_i}^f(x)}(U)=1. $$
Since $U$ is an arbitrary neighbourhood, we can conclude that for $\mu$-almost every point $x\in X$ , the measure $\delta_\nu$ is contained in the accumulation points of the sequence $\{\delta_{e_{n_i}^f(x)}\}_i$. But this is equal to say that $\nu $ is in the accumulation points of the sequence $\{e_{n_i}^f(x)\}_i$. So the measure $\nu$ is an accumulation point of the sequence $\{e_n^f(x)\}_{n }$, which is what we sought.

It remains to check the case that there is a number $n\in\mathbb N$ such that $\hat{e}_n(f)=\delta_\nu$ . In this case, using equation (\ref{equ def e hat}) we obtain:
$$\int_{X}\delta_{e_n^f(x)}d\mu=\delta_\nu,$$
so $\mu$-almost every $x\in X$ has the property that the measure $e_n^f(x)$ is equal to $\nu$. Recalling that $\nu$ is an $f$-invariant measure, every point $x$ with this property should be a periodic point and $\nu$ should be the invariant probability measure supported on the orbit of $x$. So obviously the measure $\nu$ lies in the accumulation points of the sequence $\{e_n^f(x)\}_{n }$ . This finishes the proof.
\end{proof}

Using Theorem \ref{generic stat behav} we are able to translate any information about the set $\mathcal M_{\Lambda,f}$ for $f$ in a generic subset of maps in $\Lambda$ to information about the statistical behavior of $\mu$-almost every point for a generic subset of maps . 

The following lemma shows how the set $\mathcal M_{\Lambda,f}$ depends on the map $f$:
\begin{lemma}\label{MLambda,f usc}
The map sending $f\in\Lambda $ to the set $\mathcal M_{\Lambda, f}$ is upper semi-continuous.
\end{lemma}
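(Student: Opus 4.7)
The plan is to reduce the statement to Lemma \ref{lemma.BLambda.usc} via the Dirac embedding. Let $\iota : \mathcal M_1(X) \to \mathcal M_1(\mathcal M_1(X))$ be the continuous map $\nu \mapsto \delta_\nu$. By the very definition of $\mathcal M_{\Lambda,f}$, one has
\[ \mathcal M_{\Lambda,f} = \iota^{-1}(\mathcal B_{\Lambda,f}). \]
Since $\mathcal B_{\Lambda,f}$ is closed (Lemma \ref{compact BLambda}) and $\iota$ is continuous, $\mathcal M_{\Lambda,f}$ is closed in the compact space $\mathcal M_1(X)$, hence compact, and the statement of upper semi-continuity is well posed.

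To verify upper semi-continuity at a fixed $f \in \Lambda$, I would take an arbitrary open $V \subset \mathcal M_1(X)$ with $V \supset \mathcal M_{\Lambda,f}$ and produce an open $W \subset \mathcal M_1(\mathcal M_1(X))$ containing $\mathcal B_{\Lambda, f}$ whose intersection with the image of $\iota$ lies in $\iota(V)$. The natural candidate is
\[ W := \mathcal M_1(\mathcal M_1(X)) \setminus \iota(\mathcal M_1(X) \setminus V). \]
Because $\mathcal M_1(X) \setminus V$ is a closed subset of the compact space $\mathcal M_1(X)$, it is compact, so its continuous image $\iota(\mathcal M_1(X) \setminus V)$ is closed, making $W$ open. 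The inclusion $\mathcal B_{\Lambda,f} \subset W$ follows because any Dirac element $\delta_\mu \in \mathcal B_{\Lambda,f}$ forces $\mu \in \mathcal M_{\Lambda,f} \subset V$, i.e.\ $\delta_\mu \notin \iota(\mathcal M_1(X) \setminus V)$.

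Having arranged this, I would apply the upper semi-continuity of $g \mapsto \mathcal B_{\Lambda, g}$ from Lemma \ref{lemma.BLambda.usc} to the open set $W$, obtaining a neighbourhood $U$ of $f$ in $\Lambda$ with $\mathcal B_{\Lambda, g} \subset W$ for every $g \in U$. For such $g$ and any $\nu \in \mathcal M_{\Lambda,g}$, one has $\delta_\nu \in \mathcal B_{\Lambda,g} \subset W$, which by construction of $W$ forces $\nu \in V$; hence $\mathcal M_{\Lambda,g} \subset V$, as required. The main (mild) obstacle is the construction of $W$: the naive choice $W = \iota(V)$ does not work because $\mathcal B_{\Lambda,f}$ may contain measures that are not Dirac masses on $\mathcal M_1(X)$, so one must only remove those Diracs whose base points fail to lie in $V$; compactness of $\mathcal M_1(X) \setminus V$, inherited from compactness of $X$, is precisely what makes this trimmed set closed and the argument go through.
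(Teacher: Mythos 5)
Your proof is correct, but it takes a genuinely different route from the paper's. The paper re-runs the direct sequential argument already used for Lemma \ref{lemma.BLambda.usc}: one takes $f_n \to f$ in $\Lambda$ and $\nu_n \in \mathcal M_{\Lambda, f_n}$ with $\nu_n \to \nu$, and observes that small perturbations of $f_n$ are, for $n$ large, small perturbations of $f$, so that $f$ statistically bifurcates toward $\delta_\nu$ and $\nu \in \mathcal M_{\Lambda, f}$; this is the sequential characterization of upper semi-continuity for compact-set-valued maps into a compact metric space. You instead treat Lemma \ref{lemma.BLambda.usc} as a black box and deduce the present lemma from it purely topologically, via the identity $\mathcal M_{\Lambda,f} = \iota^{-1}(\mathcal B_{\Lambda,f})$ for the Dirac embedding $\iota : \nu \mapsto \delta_\nu$. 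The one genuine subtlety is that $\iota$ is not surjective (indeed $\mathcal B_{\Lambda,f}$ typically contains non-Dirac elements), so you cannot simply push open sets forward; your ``complement of the image of the complement'' choice of $W$ handles this cleanly, and compactness of $\mathcal M_1(X) \setminus V$ is exactly what keeps $W$ open. In effect you have isolated and used the general principle that if $\Phi : \Lambda \to \mathcal K(Y)$ is upper semi-continuous and $\iota : Z \to Y$ is a continuous map from a compact space, then $g \mapsto \iota^{-1}(\Phi(g))$ is again upper semi-continuous with values in $\mathcal K(Z)$. This buys modularity and avoids repeating the perturbation argument; the paper's proof buys self-containedness at the cost of duplicating the reasoning of Lemma \ref{lemma.BLambda.usc}. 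Both are correct.
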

\begin{proof}
Let $\{f_n\}_{n }$ be a sequence converging to $f\in \Lambda$. We need to prove that if for each $n\in \mathbb N$, the map $f_n$ statistically bifurcates toward a measure $\delta_{\nu_n} \in \mathcal M_{\Lambda, f_n}$ through perturbations in $\Lambda$ and the sequence $\{\nu_n\}_n$ is convergent to a measure $\nu$, then the map $f$ statistically bifurcates toward $\delta_\nu$ through perturbations in $\Lambda$. Considering the fact that for $n$ large enough, small perturbations of the map $f_n$ are small perturbations of the map $f$, the rest of the proof is straight forward.
\end{proof}
Now let us see what is the consequence of this lemma and Theorem \ref{generic stat behav} together with the assumption of density of maps in $\Lambda$ for which the dynamics statistically bifurcates toward the Dirac mass on any invariant measure. Before that, we introduce the following definition which was used for the first time by Hofbauer and Keller in \cite{HK2}:
\begin{definition}
A map $f\in\Lambda$ is said to have \emph{maximal oscillation} if the empirical measures of almost every point accumulates to all of the invariant measures in $\mathcal{M}_1(f)$.
\end{definition}

\begin{theorem}[Maximal oscillation]\label{maximal-oscil}
Assume that there exist a
dense set $D$ in $\Lambda$ such that for every $f\in D$ it holds true that $M_{\Lambda,f} = M_1(f)$.
Then a Baire generic $g\in \Lambda$ has maximal
oscillation. 
\end{theorem}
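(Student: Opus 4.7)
The plan is to reduce Theorem \ref{maximal-oscil} to Theorem \ref{generic stat behav} by proving that $\mathcal M_{\Lambda,g}=\mathcal M_1(g)$ for a Baire generic $g\in\Lambda$. Once this equality is in hand, Theorem \ref{generic stat behav} supplies $\mathcal M_1(g)=\mathcal M_{\Lambda,g}\subset acc(\{e_n^g(x)\}_n)$ for $\mu$-a.e.\ $x$, and since $acc(\{e_n^g(x)\}_n)\subset\mathcal M_1(g)$ always holds, equality follows, which is exactly the maximal oscillation property. The inclusion $\mathcal M_{\Lambda,g}\subset\mathcal M_1(g)$ is automatic for every $g$: if $\hat e_{n_k}(g_k)\to\delta_\nu$ with $g_k\to g$ in $C^0$ and $n_k\to\infty$, then $\mu$-most $x$ have $e_{n_k}^{g_k}(x)\to\nu$, and the telescoping bound $|\int(\phi-\phi\circ g_k)\,de_{n_k}^{g_k}(x)|\leq 2\|\phi\|_\infty/n_k$ together with the $C^0$-convergence $g_k\to g$ forces $\nu$ to be $g$-invariant.

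For the nontrivial inclusion $\mathcal M_1(g)\subset\mathcal M_{\Lambda,g}$, I would first recall that $f\mapsto\mathcal M_1(f)$ is upper semi-continuous from $\Lambda$ into $\mathcal K(\mathcal M_1(X))$: Hausdorff limits of $f_n$-invariant measures are $f$-invariant when $f_n\to f$ in $C^0$, and the topology on $\Lambda$ is finer than $C^0$. Fort's theorem then produces a Baire generic set $\mathcal G\subset\Lambda$ on which this map is continuous in Hausdorff metric. Fix $g\in\mathcal G$ and $\nu\in\mathcal M_1(g)$. Hausdorff continuity at $g$ combined with density of $D$ lets me pick, for each $\varepsilon>0$, a map $h\in D$ with $d(h,g)<\delta(\varepsilon)$ small enough that $d_H(\mathcal M_1(h),\mathcal M_1(g))<\varepsilon$, then an invariant measure of $h$ within $\varepsilon$ of $\nu$. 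This yields a sequence $h_k\in D$ with $h_k\to g$ and $\nu_k\in\mathcal M_1(h_k)$ with $\nu_k\to\nu$. Since $\mathcal M_1(h_k)=\mathcal M_{\Lambda,h_k}$ by the standing hypothesis, we have $\nu_k\in\mathcal M_{\Lambda,h_k}$, and Lemma \ref{MLambda,f usc} (upper semi-continuity of $f\mapsto \mathcal M_{\Lambda,f}$) forces $\nu\in\mathcal M_{\Lambda,g}$.

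Intersecting $\mathcal G$ with the Baire generic subset delivered by Theorem \ref{generic stat behav} completes the proof. The crucial step, and the only place where both pieces of data are really used, is the transfer from ``$\mathcal M_1=\mathcal M_{\Lambda,\cdot}$ on $D$'' to ``$\mathcal M_1(g)\subset\mathcal M_{\Lambda,g}$'' at $g\in\mathcal G$: it exploits the two semi-continuities in opposite directions, namely the Hausdorff continuity of $\mathcal M_1(\cdot)$ at $g$ (Fort applied to an upper semi-continuous map) to extract the approximating sequence $\nu_k$, and upper semi-continuity of $\mathcal M_{\Lambda,\cdot}$ (Lemma \ref{MLambda,f usc}) to pass to the limit. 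I expect the extraction of approximating invariant measures in $D$ to be the only delicate point; it relies purely on the Hausdorff continuity of $\mathcal M_1(\cdot)$ at $g$ and is otherwise elementary.
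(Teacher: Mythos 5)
Your proof is correct and follows the same essential strategy as the paper: Fort's theorem produces a generic set of Hausdorff-continuity points, density of $D$ transfers the coincidence $\mathcal M_{\Lambda,\cdot}=\mathcal M_1(\cdot)$ from $D$ to that generic set, and Theorem \ref{generic stat behav} converts this into maximal oscillation. The one structural difference is a small economy in the bookkeeping. The paper applies Fort's theorem to \emph{both} set-valued maps $f\mapsto\mathcal M_1(f)$ and $f\mapsto\mathcal M_{\Lambda,f}$, intersects the two generic sets, and at a common continuity point $f$ deduces $\mathcal M_1(f)=\mathcal M_{\Lambda,f}$ because the Hausdorff limits of the (coinciding) sets $\mathcal M_1(g_n)=\mathcal M_{\Lambda,g_n}$ along $g_n\to f$, $g_n\in D$, are $\mathcal M_1(f)$ and $\mathcal M_{\Lambda,f}$ respectively. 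You instead apply Fort only to $f\mapsto\mathcal M_1(f)$, and replace the second application by two cheaper ingredients: the unconditional inclusion $\mathcal M_{\Lambda,g}\subset\mathcal M_1(g)$ (which the paper records as a Remark after Corollary \ref{bif-to-M1} but does not invoke inside this proof), and the closed-graph half of semi-continuity of $\mathcal M_{\Lambda,\cdot}$, which is precisely what the proof of Lemma \ref{MLambda,f usc} establishes. This makes explicit that only one direction of continuity for $\mathcal M_{\Lambda,\cdot}$ is actually needed. Your telescoping argument for the unconditional inclusion is sound, with one small imprecision worth flagging: $\hat e_{n_k}(g_k)\to\delta_\nu$ gives $e_{n_k}^{g_k}(x)\to\nu$ in $\mu$-probability, not $\mu$-almost everywhere; but convergence in probability is enough, since you only need to select one sequence of points $x_k$ with $e_{n_k}^{g_k}(x_k)\to\nu$ to run the telescoping estimate and conclude that $\nu$ is $g$-invariant.
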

\begin{proof}
By Proposition \ref{MLambda,f usc} the map sending $f$ to $\mathcal M_{\Lambda,f}$ is semi-continuous. The map sending $f$ to $\mathcal M_1(f)$ is also upper semi-continuous. So by applying Fort's theorem we can find a Baire generic subset $\mathcal B\subset \Lambda$ such that any $f$ in this set is a continuity point for each of these maps. Now we can approach each map $f$ in $\mathcal B$ by maps $g$ in $D$, for which we know $\mathcal M_1(g)$ and $\mathcal M_{\Lambda,g}$ co-inside. So $\mathcal M_1(f)$ and $\mathcal M_{\Lambda,f}$ co-inside as well. By Theorem \ref{generic stat behav} we know there is a Baire generic subset of $\Lambda$ that for any map $f$ in this set the empirical measures of $\mu$ almost every point $x\in X$ accumulates to each of measures in $\mathcal M _{\Lambda,f}$ . The intersection of this Baire generic set with $\mathcal B$ is still a Baire generic set and for a map $f$ in this intersection the empirical measures of $\mu$ almost every point $x\in X$ accumulates to each of measures in $\mathcal M_1(f)$.
\end{proof}

\section{Proof of Main Theorem}\label{sec.proof main theorem}
First let us introduce the following definitions and notations that we use while dealing with the dynamics of rational maps.We say a point $x\in X$ is \emph{preperiodic} if it is mapped to a periodic point $p$ after some iterations. In this case we may say the point $x$ is \emph{preperiodic to} the periodic point $p$. 
We say a measure $\mu \in \mathcal M_1(f)$ is an \textit{invariant periodic measure} if it is supported on the orbit of a periodic point.

The space of degree $d$ rational maps $\text{Rat}_d$ is a $2d+1$ dimensional complex manifold. To see this, note that we can parametrize it around any element $\frac{P}{Q}\in \text{Rat}_d$ using the coefficients of the polynomials $P$ and $Q$. These two polynomials have terms up to degree $d$ so there is $2d+2$ coefficients. But note that multiplying both $P$ and $Q$ by a constant does not change the rational map, so the dimension is $2d+1$. 
\begin{remark}
Any degree $d$ rational map has $2d-2$ critical points counting with multiplicity.
\end{remark}
 Here are some notations:
\begin{itemize}
\item $\mathrm{Per}(f)$ for the set of the periodic points of a map $f$.
\item$\mathcal{C}(f)$ for the set of critical points of a rational map $f$.
\item$\mathcal{P}(f)$ for the postcritical set of a rational map which is defined as follows:
$$\mathcal{P}(f):=\bigcup_{n\geq1}f^n(\mathcal{C}).$$
\item$\kappa_d$ for the set of those maps in $\mathrm{Rat}_d$ which has no periodic critical points (or no attracting periodic point). 
\item$\kappa_d^*$ for the set of those maps in $\kappa_d$ for which all the critical points are simple and the postcritical set does not contain any critical point.
\end{itemize}

To prove the main theorem we show that the maps in $\kappa_d$ enjoy from two nice properties stated in the following propositions. The proofs of these propositions is postponed to the next sections. \\
The first proposition is related to the statistical behavior of perturbations of the maps in $\kappa_d$ within the maximal bifurcation locus $\overline{\kappa_d}$.
\begin{proposition}\label{bif-to-per}
Assume $f$ is a map in $\kappa_d$. Then for any periodic point $p\in\mathrm{Per}(f)$, $f$ statistically bifurcates toward $\delta_{e_{\infty}^f(p)}$ through perturbations in $\overline{\kappa_d}$.
\end{proposition}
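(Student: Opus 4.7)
The plan is to exhibit a sequence $f_k\to f$ in $\overline{\kappa_d}$ together with integers $n_k\to\infty$ such that $\hat e_{n_k}(f_k)\to\delta_{e_\infty^f(p)}$. The mechanism will be to take as perturbations hyperbolic rational maps possessing a super-attracting cycle close to the $f$-orbit of $p$, whose basin of attraction has full Lebesgue measure; exponential contraction inside the basin then forces the empirical measure of Lebesgue-almost every point to collapse onto the orbit measure of that cycle, which, by continuity of the periodic continuation, is close to $e_\infty^f(p)$.

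First I would use that $f\in\kappa_d$ lies in the maximal bifurcation locus to invoke the transversality theory for active critical points near strictly postcritically finite maps. Since $p$ is not a critical point of $f$ (a periodic point of a map with no periodic critical points), the implicit function theorem continues it as a periodic point $p_k$ of the same period $m$ for any nearby map, and the $f$-orbit of $p$ deforms continuously into the $f_k$-orbit of $p_k$. Every one of the $2d-2$ critical points of $f$ is active in the Ma\~n\'e--Sad--Sullivan sense, and the transversality statements available near strictly postcritically finite maps allow me to solve \emph{simultaneously} a system of $2d-2$ critical relations of the form $f_k^{n_i}(c_i(f_k))=f_k^{j_i}(p_k)$, $i=1,\dots,2d-2$, with integers $n_i,j_i$ of my choosing. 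In particular I arrange that for at least one index $i_0$ one has $n_{i_0}=0$, i.e.\ the critical point $c_{i_0}(f_k)$ itself lies on the cycle through $p_k$; this makes the cycle super-attracting for $f_k$.

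Once such an $f_k$ is built, every critical point lies in the basin of a single super-attracting cycle, so Fatou's classification of periodic Fatou components rules out any other attracting, parabolic, Siegel, or Herman component and forces the Fatou set of $f_k$ to coincide with the basin $B_k$ of this cycle. The map $f_k$ is then hyperbolic; classical results ensure that its Julia set has zero Lebesgue measure, so $B_k$ has full Lebesgue measure, and orbits inside $B_k$ converge exponentially to the super-attracting cycle. This gives $e_n^{f_k}(x)\to e_\infty^{f_k}(p_k)$ for Lebesgue-a.e.\ $x$, hence $\hat e_n(f_k)\to\delta_{e_\infty^{f_k}(p_k)}$ as $n\to\infty$. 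A diagonal extraction picking $n_k$ so large that $d_w(\hat e_{n_k}(f_k),\delta_{e_\infty^{f_k}(p_k)})<1/k$, combined with the weak-$*$ convergence $e_\infty^{f_k}(p_k)\to e_\infty^f(p)$ (and hence $\delta_{e_\infty^{f_k}(p_k)}\to\delta_{e_\infty^f(p)}$ in $\mathcal M_1(\mathcal M_1(\hat{\mathbb C}))$), then yields $\hat e_{n_k}(f_k)\to\delta_{e_\infty^f(p)}$, which is exactly the statistical bifurcation sought.

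The hardest part is the simultaneous realization of the $2d-2$ critical relations in the transversality step. Each relation $f_k^{n_i}(c_i(f_k))=f_k^{j_i}(p_k)$ cuts out a codimension-one analytic hypersurface in the $(2d+1)$-complex-dimensional manifold $\mathrm{Rat}_d$, and a transverse intersection of these hypersurfaces in a neighborhood of a strictly postcritically finite map is precisely what the ``transversality argument'' announced in the abstract provides. This is the geometric input ensuring that these hypersurfaces actually meet, that their intersection points accumulate on $f$, and that the constructed $f_k$ remain inside $\overline{\kappa_d}$; without it one could not simultaneously steer all critical orbits onto a single preselected cycle while keeping $f_k$ arbitrarily close to~$f$.
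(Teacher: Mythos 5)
Your construction has a fatal gap: the maps $f_k$ you produce cannot lie in $\overline{\kappa_d}$. A rational map with a super-attracting cycle absorbing every critical orbit is hyperbolic, hence $J$-stable, and therefore lies in the interior of the stability locus; but the maximal bifurcation locus $\overline{\kappa_d}$ is contained in the bifurcation locus, the complement of the stability locus. Concretely, since the cycle through $p_k$ has multiplier $0$, it remains attracting for every map in a whole neighborhood of $f_k$, so every nearby map has an attracting periodic orbit and thus fails to be strictly postcritically finite; hence $f_k$ has a neighborhood disjoint from $\kappa_d$, and $f_k\notin\overline{\kappa_d}$. The proposition asks for statistical bifurcation \emph{through perturbations in $\overline{\kappa_d}$}, so hyperbolic perturbations of this kind are simply not admissible, and the mechanism breaks down at the very last step where you must certify $f_k\in\overline{\kappa_d}$.

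The paper circumvents exactly this obstruction by using a \emph{parabolic} cycle rather than a super-attracting one. After reducing via the Buff--Epstein and Buff--Gauthier approximation theorems to the case $f\in\kappa_d^*\setminus\mathcal L_d$, Epstein's transversality result is applied not to impose $2d-2$ critical relations simultaneously but to build a one-complex-parameter family $\{f_\lambda\}$ in which $2d-3$ of the critical points remain persistently preperiodic to repelling cycles while a single critical point $c_1$ is left free (Proposition \ref{1dim-famil}). A Montel/normality argument in the parameter (Lemma \ref{argument_normality}) then steers $c_1$ onto a preimage of the continuation of $p$, producing a new strictly PCF map $f_{\lambda^*}$. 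A further, explicit perturbation (Lemma \ref{lemmaparabolicreturn}) replaces $f_{\lambda^*}$ by a nearby $f_{\hat\lambda}$ possessing a parabolic cycle that shadows the repelling cycle of $p$ for an arbitrarily large fraction of its period. This map is \emph{not} hyperbolic, yet its Fatou set is still a full-Lebesgue-measure immediate basin of the parabolic cycle (Lemma \ref{Urbanski}, using that a rational map with no recurrent critical point and nonempty Fatou set has a Julia set of zero Lebesgue measure), which gives the collapse of $\hat e_n$ onto the parabolic cycle's measure. Crucially, $\hat\lambda$ lies in the bifurcation locus of the one-parameter family, so the same normality argument shows $f_{\hat\lambda}$ is approximated by strictly PCF maps and hence $f_{\hat\lambda}\in\overline{\kappa_d}$. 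Your transversality and diagonal-extraction framework is in the right spirit, but replacing parabolic by super-attracting sacrifices membership in $\overline{\kappa_d}$, which is the whole point of the proposition.
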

Note that a rational map of degree greater than one, has always (infinitely) many different periodic orbits, and in fact, the set of periodic points is dense in the Julia set. So the set of periodic measures contains infinitely many elements, each one corresponds to a periodic cycle. The next proposition states that for a map in $\kappa_d$, the set of periodic measures is in some sense maximal. 
\begin{proposition}\label{per-meas-density}
For any strictly postcritically finite rational map $f$, the set of invariant probability measures which are supported on the orbit of a periodic point, is dense in the set of all invariant measures $\mathcal{M}_1(f)$.
\end{proposition}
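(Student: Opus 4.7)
The plan is to exploit the uniform expansion of strictly postcritically finite rational maps on the Riemann sphere, derive Bowen's specification property, and then invoke Sigmund's classical density theorem for invariant measures.

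First I would verify that for any strictly postcritically finite $f$ one has $J(f)=\hat{\mathbb{C}}$. By Sullivan's no wandering domains theorem every Fatou component is preperiodic, and the Fatou-Sullivan classification of periodic Fatou components admits only five types (attracting, super-attracting, parabolic, Siegel disk, Herman ring); each of them forces either a periodic critical point (super-attracting) or an accumulation of a critical orbit on a non-repelling invariant set (attracting, parabolic, or the boundary of a rotation domain), both incompatible with the assumption that every critical orbit lands on a repelling cycle. Hence the Fatou set is empty. I would then invoke Thurston's theorem that every postcritically finite rational map is uniformly expanding on $\hat{\mathbb{C}}$ with respect to a suitable orbifold metric (see Milnor, Chapter 19), the orbifold singularities being concentrated at the finite postcritical set.

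The second step is to derive the specification property on $\hat{\mathbb{C}}$ from this expansion. By the standard construction for uniformly expanding maps on a topologically mixing invariant set, $f$ admits a finite Markov partition, and the associated symbolic coding provides a semiconjugacy from a topologically mixing subshift of finite type $(\Sigma_A,\sigma)$ onto $(\hat{\mathbb{C}},f)$. Subshifts of finite type satisfy Bowen's specification property, so Sigmund's theorem applies directly on the symbolic side to yield that ergodic periodic measures are weak-$*$ dense in $\mathcal{M}_1(\sigma)$. Since the semiconjugacy sends periodic orbits to periodic orbits and is continuous with respect to the weak-$*$ topology on invariant measures, pushing this density forward gives the density of periodic measures in $\mathcal{M}_1(f)$, which is exactly the statement of the proposition.

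The main obstacle I anticipate is the compatibility between the orbifold metric in which expansion is uniform and the standard spherical metric on $\hat{\mathbb{C}}$ near the postcritical points, where the two metrics are not Lipschitz equivalent; this would complicate a direct verification of Bowen's specification in the spherical metric via shadowing. However, since weak-$*$ convergence of probability measures on $\hat{\mathbb{C}}$ depends only on the underlying topology and not on the choice of compatible metric, and since the Markov-coding route bypasses any direct verification of specification in the spherical metric, the difficulty is purely technical and can be absorbed into the classical construction of Markov partitions for postcritically finite maps.
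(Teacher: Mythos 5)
Your route is genuinely different from the paper's. The paper never invokes Markov partitions, specification, or Sigmund's theorem; instead it equips $\hat{\mathbb C}\setminus\mathcal A$ (where $\mathcal A$ is the postcritical set together with $\mathcal C(f)$ and one extra fibre of preimages) with its Poincar\'e metric $d_h$, uses the Schwarz lemma to show every inverse branch of $f$ is a strict $d_h$-contraction, constructs by hand a periodic orbit shadowing a $\mu$-regular orbit (for ergodic $\mu$), and then builds a single periodic orbit that spends prescribed proportions of time near each of finitely many periodic cycles in order to reach convex combinations. That last step is a bare-hands instance of specification in this concrete setting. Working with inverse branches on the punctured sphere sidesteps entirely the branching at critical points, which is the main technical headache in your plan.

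The gap in your proposal is the step where you assert that the ``standard construction for uniformly expanding maps'' yields a finite Markov partition and an SFT factor map, and later that any difficulties ``can be absorbed into the classical construction of Markov partitions for postcritically finite maps.'' Thurston's orbifold expansion is correct, but these maps are \emph{not} local homeomorphisms: at a point of the postcritical orbifold with weight $k$ the map is modelled on $z\mapsto z^{k}$, so the Bowen--Sinai machinery for expanding local diffeomorphisms does not apply off the shelf. A Markov/tile structure and an SFT coding do exist for expanding Thurston maps, but this is the content of the Bonk--Meyer theory (and Meyer's ``Expanding Thurston maps as factors of subshifts of finite type''), it is proved there only for a sufficiently high iterate $f^{n}$, and it is far from ``classical.'' Without citing and actually using that theory your argument does not close. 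If you do import it, you must also (i) note that the induced map on invariant measures of a continuous factor between compact systems is surjective, so density upstairs pushes down, and (ii) handle the passage from $f^{n}$-periodic measures to $f$-periodic measures by averaging each $f^{n}$-periodic measure over $f,\dots,f^{n-1}$. Both are routine, but they are missing. In short: your high-level strategy (uniform expansion $\Rightarrow$ symbolic model $\Rightarrow$ Sigmund) is sound and would give a clean alternative proof, but as written it leans on a result you have mislabelled as folklore and does not account for the branched nature of the covering; the paper's argument is longer line-by-line but entirely self-contained and avoids these issues by working with contracting inverse branches rather than with the forward-expanding orbifold picture.
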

\begin{remark}
In the proof of Proposition \ref{per-meas-density} we will see that every periodic point for a strictly postcritically finite map is repelling. 
\end{remark}
From these two propositions we conclude that for a map in $\kappa_d$ the set of measures to which $f$ statistically bifurcates is maximal. 
\begin{corollary}\label{bif-to-M1}
Any map in $\kappa_d$ statistically bifurcates toward the Dirac mass on any of its invariant measures through perturbations in $\overline{\kappa_d}$ or in other word:
$$\forall f \in \kappa_d, \quad \{\nu\in \mathcal M_1(X)|\delta_{\nu} \in \mathcal B_{\Lambda,f}\}=\mathcal{M}_1(f).$$
\end{corollary}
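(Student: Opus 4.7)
The plan is to prove the stated equality $\{\nu\in\mathcal M_1(X): \delta_\nu\in\mathcal B_{\overline{\kappa_d},f}\}=\mathcal M_1(f)$ by establishing the two inclusions separately. The containment $\mathcal M_{\overline{\kappa_d},f}\subseteq \mathcal M_1(f)$ is a routine invariance argument: if $\hat e_{n_k}(f_k)\to \delta_\nu$ for some $f_k\to f$ in $\overline{\kappa_d}$ and $n_k\to\infty$, then on a subsequence $e_{n_k}^{f_k}(x)\to \nu$ for $\mu$-a.e. $x$; since the empirical measure of $f_k$ along any orbit is $(f_k)_*$-invariant up to an error of order $1/n_k$ and since $f_k\to f$ uniformly, one can pass to the limit in $(f_k)_*e_{n_k}^{f_k}(x)-e_{n_k}^{f_k}(x)\to 0$ to conclude that $\nu$ is $f_*$-invariant.

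The substantive direction is $\mathcal M_1(f)\subseteq \mathcal M_{\overline{\kappa_d},f}$. Proposition \ref{bif-to-per} places every invariant periodic measure $e_\infty^f(p)$ of $f$ inside $\mathcal M_{\overline{\kappa_d},f}$, and Proposition \ref{per-meas-density} states that these periodic measures are dense in $\mathcal M_1(f)$. I would then combine the two using the facts that (i) the map $\nu\mapsto\delta_\nu$ from $\mathcal M_1(X)$ to $\mathcal M_1(\mathcal M_1(X))$ is continuous (indeed an isometric embedding for the Wasserstein metric), and (ii) by Lemma \ref{compact BLambda}, the set $\mathcal B_{\overline{\kappa_d},f}$ is closed. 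For an arbitrary $\nu\in\mathcal M_1(f)$, pick periodic measures $\nu_{p_k}\to\nu$; then $\delta_{\nu_{p_k}}\in\mathcal B_{\overline{\kappa_d},f}$ for each $k$ and $\delta_{\nu_{p_k}}\to\delta_\nu$, so $\delta_\nu\in\mathcal B_{\overline{\kappa_d},f}$ by closedness.

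This reduces the corollary to a short assembly once the two propositions are in hand, so the corollary itself presents no genuine obstacle. The real difficulties lie inside those propositions: Proposition \ref{bif-to-per} requires the transversality argument that lets one perturb $f$ within $\overline{\kappa_d}$ while driving a critical orbit to shadow a prescribed periodic orbit (this is the content of the statistical bifurcation machinery set up in Section 1), and Proposition \ref{per-meas-density} requires showing density of periodic measures in $\mathcal M_1(f)$ for strictly postcritically finite $f$, which rests on the uniformly expanding nature of the dynamics off the postcritical set in this regime. Once both are granted, the corollary is essentially a two-line soft argument, and through Theorem \ref{maximal-oscil} it then implies the Main Theorem.
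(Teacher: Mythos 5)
Your argument is correct and is essentially the one the paper intends: the easy inclusion $\mathcal M_{\overline{\kappa_d},f}\subseteq\mathcal M_1(f)$ is the content of the remark immediately following the corollary, and the substantive inclusion is exactly the combination of Propositions \ref{bif-to-per} and \ref{per-meas-density} via closedness of $\mathcal B_{\overline{\kappa_d},f}$ (Lemma \ref{compact BLambda}) and continuity of $\nu\mapsto\delta_\nu$. The paper leaves this assembly implicit; you have simply spelled it out.
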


\begin{remark}
We use the word maximal because a map $f$ cannot statistically bifurcates toward the Dirac mass on a measure that is not $f$-invariant. 
\end{remark}
Let us show how this corollary together with Proposition \ref{maximal-oscil} implies the main theorem.
\begin{proof}[End of proof of Main Theorem]
By Corollary \ref{bif-to-M1}, every map in $\kappa_d$ bifurcates toward the Dirac mass on each of its invariant measures. So by Proposition \ref{maximal-oscil}, for a generic $f$ in $\overline{\kappa_d}$, 
the set of accumulation points of the sequence of empirical measures of Leb-almost every point, is equal to the whole set of invariant measures. This finishes the proof.
\end{proof}


\section{Statistical bifurcation toward periodic measures}\label{sec.stat-bif-to-per-pt}
The aim of this section is to prove Proposition \ref{bif-to-per}.
First let us recall the following two theorems from \cite{BE09} and \cite{buff2013perturbations}.  

We recall that a \emph{Latt\`es map} $f$ is a postcritically finite map which is semi-conjugated to an affine map $A: z\mapsto az+ b$ on a complex torus $\mathcal T$, via a finite to one holomorphic semi conjugacy $\Theta$ :
\[\Theta \circ A= f\circ \Theta\; .\] 
A latt\`es map $f$ is \emph{flexible} if we can  choose $\Theta$ with degree $2$ and $A$ with $a>1$ integer.  

We denote by $\mathcal L_d$ the set of flexible Latt\`es maps of degree $d$. We refer the reader to the paper of Milnor \cite{milnor2006lattes} for further discussion on Latt\'es maps.\\
We observe that:
\[\mathcal L_d\subset \kappa^*_d\subset \kappa_d\, .\]
On the other hand:
\begin{theorem*}[Buff-Epstein]\label{theoBE} The following inclusion holds true:
\[\kappa_d \setminus \mathcal L_d \subset \overline{\kappa^*_d\setminus \mathcal L_d}\, .\]
\end{theorem*}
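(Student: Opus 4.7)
The plan is to realize $\kappa_d^* \setminus \mathcal L_d$ as a dense $G_\delta$ subset of $\kappa_d \setminus \mathcal L_d$. Relative to $\kappa_d$, membership in $\kappa_d^*$ imposes two extra conditions: every critical point is simple, and no critical point belongs to the postcritical set. The first is Zariski-open and dense in $\mathrm{Rat}_d$, since the non-simple critical locus is the vanishing of a discriminant. The second decomposes as a countable intersection of open conditions
\[
\bigcap_{k\geq 1}\bigcap_{1\leq i,j\leq 2d-2}\{g\in \mathrm{Rat}_d: g^k(c_i(g))\neq c_j(g)\},
\]
where $c_i(g)$ are locally chosen holomorphic parameterizations of the critical points. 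Being non-Latt\`es is also open and dense. So the target set is $G_\delta$, and the problem reduces to proving density of each defining open condition inside $\kappa_d \setminus \mathcal L_d$ and then invoking Baire category.

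First I would fix $f\in \kappa_d \setminus \mathcal L_d$ and a small neighborhood $U \subset \mathrm{Rat}_d$ on which the parameterizations $g\mapsto c_i(g)$ are holomorphic. The bad loci
\[
V_{k,i,j}:=\{g\in U: g^k(c_i(g))=c_j(g)\}
\]
are locally analytic subvarieties of $U$. The crux is to show each $V_{k,i,j}$ has empty interior in $U$; granted this, the countable union $\bigcup V_{k,i,j}$ is a meager $F_\sigma$, and intersecting its complement with the open dense non-Latt\`es locus and the open dense simple-critical locus yields a dense $G_\delta$.

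The hard part will be ruling out persistence of critical orbit relations, i.e.\ showing no $V_{k,i,j}$ contains an open set of $\mathrm{Rat}_d$. If some $V_{k,i,j}$ had non-empty interior, the relation $g^k(c_i(g))=c_j(g)$ would hold on a full-dimensional family in parameter space. The rigidity/transversality theory of critical orbit relations (the Thurston rigidity theorem for postcritically finite maps, extended by Epstein to general deformation spaces) asserts that the only open critical orbit relations come from a semi-conjugacy with an affine torus map, i.e.\ from the flexible Latt\`es locus. Since $f\notin \mathcal L_d$, transversality supplies a tangent direction at $f$ along which the derivative of $g^k(c_i(g))-c_j(g)$ is non-zero, giving a perturbation that breaks the relation.

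The final step is to verify the Baire argument relatively inside $\kappa_d$. Since $\kappa_d$ itself is the $G_\delta$ set $\bigcap_{k,i}\{g: g^k(c_i(g))\neq c_i(g)\}$, density of each defining open condition must be established \emph{inside} $\kappa_d$, not just in $\mathrm{Rat}_d$; this follows from the same transversality input, which provides enough independent tangent directions at any non-Latt\`es $f$ to break a prescribed relation without accidentally creating a periodic critical orbit. A countable Baire argument then produces $g \in \kappa_d^* \setminus \mathcal L_d$ arbitrarily close to $f$, proving the claimed inclusion.
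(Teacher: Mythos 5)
The paper does not prove this statement; it is quoted as a theorem of Buff and Epstein and explicitly attributed to \cite{BE09} (``This theorem is a part of the main theorem of [BE09]''), so there is no internal proof to compare against.

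Evaluating your proposal on its own merits, there is a structural gap at the outset. You assert that $\kappa_d$ is the $G_\delta$ set $\bigcap_{k,i}\{g : g^k(c_i(g))\neq c_i(g)\}$. That set is the complement of the periodic-critical-point locus, but $\kappa_d$ in this theorem is the set of \emph{strictly postcritically finite} maps: every critical orbit must be finite and land on a repelling cycle (the notation section's phrasing is loose, but this is how $\kappa_d$ is used throughout the paper, it is the set whose closure is called the maximal bifurcation locus, and it is the set Buff--Epstein's theorem actually concerns). As such, $\kappa_d$ is a countable union of proper analytic subvarieties of $\mathrm{Rat}_d$ --- a meager $F_\sigma$, not a residual $G_\delta$ --- so there is no way to run a Baire-category argument ``inside $\kappa_d$'' in which $\kappa_d^*\setminus\mathcal L_d$ emerges as a residual subset. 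The approximation has to be carried out while \emph{preserving} postcritical finiteness: starting from $f\in\kappa_d\setminus\mathcal L_d$, one deforms $f$ along the intersection of critical-orbit-relation subvarieties, using Epstein's transversality to separate critical orbits and simplify critical points while still forcing every critical point onto a repelling cycle. You correctly identify Epstein's transversality as the essential input, but you cite it as a black box to dispose of the hard part, and the Baire scaffolding built around it is pointed in the wrong direction: a generic perturbation in $\mathrm{Rat}_d$ that breaks a prescribed critical-orbit relation will typically destroy postcritical finiteness altogether, landing you outside $\kappa_d$ rather than inside $\kappa_d^*$.
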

This theorem is a part of the main theorem of \cite{BE09}, whereas the following one is the main theorem of \cite{buff2013perturbations}.

\begin{theorem*}[Buff-Gauthier]\label{theoBG}
A flexible Latt\`es map can be approximated by strictly postcritically finite rational maps which are not a flexible Latt\`es map:
\[\mathcal L_d \subset \overline{\kappa_d\setminus \mathcal L_d}\, .\]
\end{theorem*}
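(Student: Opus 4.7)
The plan is to approach any flexible Latt\`es map $f_0 \in \mathcal{L}_d$ by strictly postcritically finite rational maps that are not themselves flexible Latt\`es, using bifurcation current theory combined with a Misiurewicz-type transversality argument.

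First I would record that $f_0$ has $2d-2$ critical points, and through the semiconjugacy $\Theta \circ A = f_0 \circ \Theta$ each critical point is eventually mapped to a repelling periodic orbit of $f_0$ (since $|a|>1$, torsion points of the torus $\mathcal{T}$ project to repelling periodic points of $f_0$). In particular, for each critical marking $c_i(f)$ defined on a small neighborhood $U$ of $f_0$ in $\mathrm{Rat}_d$, the family $f \mapsto f^n(c_i(f))$ fails to be normal at $f_0$; that is, $f_0$ lies in the activity locus of every critical point and hence in the support of every bifurcation current $T_i$ associated to $c_i$.

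Next I would invoke the strongest available input, that $f_0$ belongs to the support of the full bifurcation measure $\mu_{\mathrm{bif}} := T_1 \wedge \dots \wedge T_{2d-2}$. Combined with the structural fact that $\mathrm{supp}(\mu_{\mathrm{bif}})$ is contained in the closure of the Misiurewicz locus (strictly PCF parameters for which each critical point is preperiodic to a repelling cycle), this at once supplies a sequence of strictly postcritically finite maps $f_n \to f_0$. The mechanism is Montel's theorem applied to the iterates $f \mapsto f^n(c_i(f))$: non-normality forces these orbits, after an arbitrarily small perturbation of $f_0$, to hit a prescribed repelling periodic point of the perturbed map, and performing this for all $2d-2$ critical points simultaneously is exactly the transversality content of wedging the activity currents.

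Finally I would argue that a generic choice among these $f_n$ lies outside $\mathcal{L}_d$. The flexible Latt\`es locus is extremely thin inside the $(2d+1)$-dimensional parameter space: up to M\"obius conjugacy it is parametrized essentially by the torus modulus together with the integer $a$, so it sits in a very low-dimensional subvariety. Since the sequence of approximating Misiurewicz maps can be chosen to depend on $2d-2$ complex free parameters, a dimension count forces all but finitely many of them off $\mathcal{L}_d$. The main obstacle in this strategy is the joint transversality step: one must ensure both that the $2d-2$ critical activity conditions can be independently imposed near $f_0$, and, more subtly, that the perturbations achieving them are not inadvertently trapped inside $\mathcal{L}_d$ by the rigidity of the Latt\`es semiconjugacy. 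Resolving this requires an explicit infinitesimal deformation analysis of the affine model $A$ producing directions in $\mathrm{Rat}_d$ along which the Latt\`es structure is broken while critical preperiodicity is preserved; this is the technical heart of the Buff-Gauthier argument.
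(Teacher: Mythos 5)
The paper does not prove this statement: it is cited as the main theorem of \cite{buff2013perturbations}, so there is no in-paper argument to compare against. Evaluating your sketch on its own terms, there are two genuine gaps.

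The first is a circularity in your "strongest available input." You propose to start from the assertion that $f_0 \in \mathrm{supp}(\mu_{\mathrm{bif}})$ for a flexible Latt\`es map $f_0$ and then use density of Misiurewicz parameters in $\mathrm{supp}(\mu_{\mathrm{bif}})$ to manufacture the approximating sequence. But establishing $\mathcal L_d \subset \mathrm{supp}(\mu_{\mathrm{bif}})$ is precisely what the Buff--Gauthier approximation theorem is designed to yield; it is an output, not an input. The standard route to placing a strictly postcritically finite map in $\mathrm{supp}(\mu_{\mathrm{bif}})$ is the Buff--Epstein transversality criterion (the one quoted later in this paper), and that criterion breaks exactly on flexible Latt\`es maps, because they move in a positive-dimensional family of postcritically finite maps, so the relevant differential fails to be injective modulo conjugacy. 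Without an independent argument that $f_0 \in \mathrm{supp}(\mu_{\mathrm{bif}})$, the chain of reasoning never gets started.

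The second gap is the closing dimension count. Even granting $f_0\in\mathrm{supp}(\mu_{\mathrm{bif}})$ and density of strictly postcritically finite maps therein, all that yields is a sequence of strictly postcritically finite maps converging to $f_0$; it gives no control on whether these maps lie in $\mathcal L_d$. The Misiurewicz locus near $f_0$ certainly contains the Latt\`es family itself (flexible Latt\`es maps \emph{are} strictly postcritically finite), and a priori could coincide with it locally. Your claim that "the approximating Misiurewicz maps can be chosen to depend on $2d-2$ complex free parameters" is not substantiated: the strictly postcritically finite locus is a countable union of low-dimensional subvarieties, not a $(2d-2)$-parameter family, and nothing in the bifurcation-current formalism by itself guarantees that any of the branches near $f_0$ leave $\mathcal L_d$. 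You acknowledge at the end that the real work is an explicit infinitesimal deformation analysis of the affine model $A$ showing one can break the Latt\`es semiconjugacy while preserving critical preperiodicity; that diagnosis is correct, but it is the entire content of the theorem, so as written the proposal defers the whole proof rather than giving one.
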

These two theorems imply:
\begin{corollary}\label{cor-approx}
Any strictly postcritically finite rational map $f\in \kappa_d$ can be approximated by maps in $\kappa_d^*$ which are not flexible Latt\`es map:
\[\kappa_d  \subset \overline{\kappa^*_d\setminus \mathcal L_d}\, .\]
\end{corollary}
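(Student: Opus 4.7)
The plan is to combine the two cited theorems in a two-step approximation, handling the flexible Latt\`es case and the non-Latt\`es case separately, and then stitching them together with a diagonal argument.

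First, I would split $\kappa_d$ as the disjoint union
\[
\kappa_d = (\kappa_d\setminus \mathcal L_d)\;\sqcup\;\mathcal L_d.
\]
On the first piece, the Buff--Epstein theorem directly gives the desired inclusion
\[
\kappa_d\setminus \mathcal L_d \subset \overline{\kappa_d^*\setminus \mathcal L_d},
\]
so every $f\in\kappa_d\setminus\mathcal L_d$ is already a limit of maps in $\kappa_d^*\setminus \mathcal L_d$, which is what we want.

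Next, for the Latt\`es piece $\mathcal L_d$, I would proceed in two steps. Fix $f\in\mathcal L_d$. By the Buff--Gauthier theorem there is a sequence $\{g_n\}_n\subset \kappa_d\setminus \mathcal L_d$ with $g_n\to f$ in $\mathrm{Rat}_d$. For each $n$, the Buff--Epstein theorem applied to $g_n$ yields a sequence $\{h_{n,k}\}_k\subset \kappa_d^*\setminus \mathcal L_d$ with $h_{n,k}\to g_n$ as $k\to \infty$. A standard diagonal extraction then provides $k(n)\to\infty$ such that $h_{n,k(n)}\to f$, and the sequence $\{h_{n,k(n)}\}_n$ lies in $\kappa_d^*\setminus \mathcal L_d$. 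Hence $f\in\overline{\kappa_d^*\setminus \mathcal L_d}$, giving
\[
\mathcal L_d\subset \overline{\kappa_d^*\setminus \mathcal L_d}.
\]

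Combining the two inclusions yields $\kappa_d\subset \overline{\kappa_d^*\setminus \mathcal L_d}$, as claimed. There is no real obstacle here beyond verifying that $\mathrm{Rat}_d$ is metrizable (so that the diagonal argument and the characterization of closures via sequences both apply), which is immediate since $\mathrm{Rat}_d$ is a complex manifold. The substance of the corollary is entirely carried by the two black-box theorems of Buff--Epstein and Buff--Gauthier, and our role is merely to compose the two approximations.
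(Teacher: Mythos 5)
Your proof is correct and follows exactly the same two-case strategy as the paper: apply Buff--Epstein directly on $\kappa_d\setminus\mathcal L_d$, and on $\mathcal L_d$ first approximate by Buff--Gauthier and then by Buff--Epstein. The only difference is that you spell out the diagonal extraction explicitly, which the paper leaves implicit.
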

\begin{proof}
By Proposition \ref{theoBE}, if $f$ is not a flexible Latt\`es map we are done. If $f$ is a flexible Latt\`es map, then first by Proposition \ref{theoBG}, it can be approximated by a strictly postcritically finite map which is not a flexible Latt\`es map. Now using Proposition \ref{theoBE} again, we are done.
\end{proof}
Corollary \ref{cor-approx} enables us to transfer the following property of maps in $\kappa_d^*\setminus \mathcal L_d$ to those in $\kappa_d$, in order to deduce Proposition \ref{bif-to-per}.
\begin{lemma}[Main lemma]\label{Main lemma}
Let $f$ be a map in $\kappa_d^*\setminus \mathcal L_d$. Then for any periodic point $q\in\mathrm{Per}(f)$, $f$ statistically bifurcates toward $\delta_{e_{\infty}^f(q)}$ through perturbations in $\overline{\kappa_d}$.
\end{lemma}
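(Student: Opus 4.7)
The plan is to produce $f_k \to f$ in $\overline{\kappa_d}$ and $n_k \to \infty$ such that $\hat{e}_{n_k}(f_k) \to \delta_\nu$, where $\nu := e_\infty^f(q)$ is the uniform probability measure on the orbit of $q$. Since $\hat{e}_n(g) = (e_n^g)_*\mathrm{Leb}$, this target is equivalent to $\mathrm{Leb}(\{x \in \hat{\mathbb C} : d_w(e_{n_k}^{f_k}(x), \nu) < \varepsilon\}) \to 1$ for every $\varepsilon > 0$; in other words, Lebesgue-almost every orbit of $f_k$ must spend almost all of its first $n_k$ iterations close to the orbit of $q$.

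The central tool is the Buff--Epstein transversality theorem, which applies because $f \in \kappa_d^* \setminus \mathcal L_d$: denoting by $c_i(g)$ the holomorphic motions of the critical points of $f$ and by $m_i$ their $f$-preperiods, the evaluation map $g \mapsto (g^{m_i}(c_i(g)))_{i=1}^{2d-2}$ is a submersion at $g=f$. This yields $(2d-2)$ independent parameters that control the critical-orbit landings and leaves a three-dimensional family of residual perturbations in $\mathrm{Rat}_d$. I use these residual parameters to force a codimension-one tangent (saddle-node) bifurcation that produces a parabolic periodic cycle $P_k$ of period $p = \mathrm{per}(q)$ lying within $1/k$ of the orbit of $q$, while simultaneously redirecting every critical orbit of $f_k$ into the parabolic basin of $P_k$ (keeping at least one critical point in the basin itself, as required by Fatou's theorem). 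The resulting $f_k$ is geometrically finite of parabolic type and lies in $\overline{\kappa_d}$ as a limit of strictly postcritically finite maps obtained by splitting $P_k$ into two nearby repelling cycles while preserving the preperiodic landings of the other critical orbits.

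The dynamical analysis of $f_k$ then runs as follows. Every critical orbit is non-recurrent and captured by the parabolic basin, so a classical theorem for geometrically finite rational maps (after Denker--Urba\'nski) gives $\mathrm{Leb}(J(f_k)) = 0$; by Sullivan's classification, the Fatou set of $f_k$ coincides with the parabolic basin $B(P_k)$, which therefore has full Lebesgue measure. On $B(P_k)$ orbits converge polynomially to $P_k$, so $e_n^{f_k}(x) \to \nu_k^P$ (the uniform measure on $P_k$) for Lebesgue-a.e.\ $x$. Choosing $n_k$ large enough that $\mathrm{Leb}(\{x : d_w(e_{n_k}^{f_k}(x), \nu_k^P) < 1/k\}) > 1 - 1/k$, and using $\nu_k^P \to \nu$ as $P_k$ Hausdorff-converges to the orbit of $q$, we conclude $\hat{e}_{n_k}(f_k) \to \delta_\nu$.

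The main obstacle is the transversality construction of $f_k$ in the second paragraph: one must use the three residual parameters of the Buff--Epstein submersion to simultaneously realize the codimension-one tangent bifurcation creating $P_k$ near the orbit of $q$ and impose the capturing condition on the other critical orbits, then verify that the resulting parabolic map is accumulated by strictly postcritically finite maps. The combination of constraints is achievable precisely because $f \in \kappa_d^* \setminus \mathcal L_d$ makes the evaluation map a submersion; flexible Latt\`es maps are excluded both because the submersion fails there and because the rigid torus action obstructs the creation of a new parabolic cycle at the prescribed location. A secondary technical point is checking that membership $f_k \in \overline{\kappa_d}$ can be certified by the explicit strictly-postcritically-finite approximants produced above.
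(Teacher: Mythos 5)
The proposal takes the right overall shape (produce parabolic maps in $\overline{\kappa_d}$ whose basin carries full Lebesgue measure and whose parabolic cycle equidistributes close to the orbit of $q$), and it correctly identifies the Buff--Epstein transversality as the tool and the Denker--Urba\'nski/Przytycki-type ``zero area Julia set'' theorems as the endgame. But the central construction has a gap that is fatal as stated.

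You propose a ``codimension-one tangent (saddle-node) bifurcation'' producing a parabolic cycle $P_k$ of period $p=\mathrm{per}(q)$ within $1/k$ of the orbit of $q$. This cannot happen. Since $q$ is a \emph{repelling} cycle, the fixed-point equation $f^p(z)=z$ has a transversal zero at $q$ (the multiplier is $\gamma$ with $|\gamma|>1\neq 1$), so by the implicit function theorem every map $g$ sufficiently close to $f$ has, in a neighbourhood of the orbit of $q$, exactly the hyperbolic continuations of the cycles of $f$ of period $p$ and no new ones; in particular no parabolic cycle of period $p$ can appear there. The paper's construction (Lemma~\ref{lemmaparabolicreturn}) instead produces a parabolic cycle of \emph{unbounded period} $N+n$, $n\to\infty$, that enters the linearizing chart of $q$, spends $n-O(1)$ iterates shadowing the repelling cycle of $q$, and then closes up. The equidistribution of this long parabolic cycle near the orbit of $q$ is exactly what makes $\nu_k^P\to\nu$; the period-$p$ version you propose does not exist. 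Proving Lemma~\ref{lemmaparabolicreturn} is the real work of the argument: one writes the first return map $f_\lambda^{N+n}$ near $\tilde c$ in linearizing coordinates around $q$, and solves the two scalar equations (fixed point, multiplier $=1$) by Hurwitz's theorem; this whole analytic step is absent from your sketch.

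Two further points. First, the ``three residual parameters'' left over by the Buff--Epstein submersion are precisely the tangent directions to the M\"obius conjugacy class of $f$ and are dynamically trivial, so they give you no freedom to create new bifurcations. The paper instead uses the submersion to cut out a \emph{one-dimensional} transversal family $\{f_\lambda\}$ along which only the landing of $c_1$ moves and the other $2d-3$ critical orbits stay persistently preperiodic; this family is then analysed with a Montel/normal-families argument (Lemma~\ref{argument_normality}) to first make $c_1$ preperiodic to $q$ before the parabolic return step. That intermediate step is also missing from your outline. Second, ``redirecting every critical orbit of $f_k$ into the parabolic basin'' is not what you want: the $2d-3$ other critical points must remain preperiodic to repelling cycles, hence in the Julia set, both so that $f_k$ lies in $\overline{\kappa_d}$ and so that the Fatou set reduces to the single parabolic basin (which is what lets Sullivan's classification and the non-recurrent-critical-point dimension estimate force $\mathrm{Leb}(J(f_k))=0$). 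Only $c_1$ should land in the Fatou set, as required by Fatou's theorem for the existence of the parabolic basin.
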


\begin{figure}\label{Fig-Lambda1}
    \centering
    \includegraphics[scale=0.5]{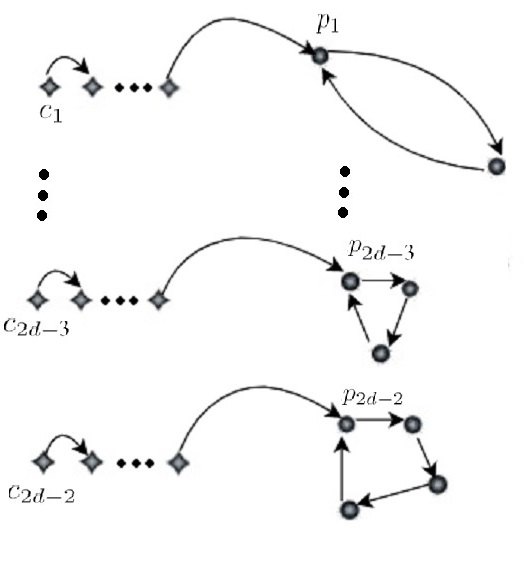}
    \caption{The dynamics of the initial map $f\in \kappa_d^*\setminus \mathcal L_d$. }
    \end{figure} 

We will prove this lemma below, before this let us prove Proposition \ref{bif-to-per}.
\begin{proof}[Proof of Proposition \ref{bif-to-per}]
For any map $f$ in $\kappa_d$, any periodic point $p$ is repelling, and its hyperbolic continuation is well defined and so the periodic measure supported on its cycle has a well defined continuation for $f'$ close to $f$.  

 Hence, to show that $f$ statistically bifurcates toward $\delta_{e_{\infty}^f(p)}$ through perturbations in $\overline{\kappa_d}$, it is enough to show that there is some map $f'$ in $\overline{\kappa_d}$ arbitrary close to $f$ that statistically bifurcates toward the Dirac mass on the continuation $e_{\infty}^{f'}(p)$ of this measure. But by Corollary \ref{cor-approx}, arbitrary close to $f$ we can find elements of $\kappa_d^*\setminus \mathcal L_d$, and by Main lemma, these maps statistically bifurcate toward the Dirac mass on any of their periodic measures, in particular, to the Dirac mass on the continuation $e_{\infty}^{f'}(p)$. This finishes the proof of Proposition \ref{bif-to-per}.
\end{proof}

\begin{proof}[Proof of Lemma \ref{Main lemma}] 
Denote by $c_1(f)$,...,$c_{2d-2}(f)$ the  $2d-2$ distinct critical points of $f$. There are repelling periodic points $p_1(f)$,...,$p_{2d-2}(f)$ and positive integers $n_1$,...,$n_{2d-2}$ such that as it is shown in Figure $1$
$$ f^{n_i}(c_i(f))=p_i(f),\ \ \ \ 1\leq i\leq 2d-2\ $$

The critical points are simple and periodic points are repelling so by the implicit function theorem, for any $1\leq i \leq 2d-2$ there are 
\begin{itemize}
 \item analytic germ $c_i:(\mathrm{Rat}_d,f) \to \hat{\mathbb{C}}$ following the critical point of $f$ as $g$ ranges in a neighbourhood of $f$ in $\mathrm{Rat}_d$ and
 \item analytic germ $p_i:(\mathrm{Rat}_d,f) \to \hat{\mathbb{C}}$ following the periodic point of $f$ as $g$ range in a neighbourhood of $f$ in $\mathrm{Rat}_d$.
\end{itemize}
Let $F:(\mathrm{Rat}_d,f) \to \mathbb{C}^{2d-2}$ and $G:(\mathrm{Rat}_d,f) \to \mathbb{C}^{2d-2}$ be defined by:
$$ F(g) = \begin{pmatrix}
	 F_{1}(g) \\
	 \vdots \\
	 F_{2d-2}(g)
	\end{pmatrix}
\  \mathrm{with}\ \    F_{j}(g):=f^{n_{j}}(c_{j}(g)) \  \ \mathrm{and}\ \   P(g) = 
	\begin{pmatrix}
	p_1(g) \\
	\vdots \\
	p_{2d-2}(g)
	\end{pmatrix}.
$$ 

Denote by $D_{f}F$ and $D_{f}P$ the differentials of $F$ and $P$ at $f$. The following transversality result has been proved many times, see for example \cite{vS00}. We recall a version which is presented in \cite{BE09}:
\begin{proposition}[Epstein]
The linear map
$$D_{f}F-D_{f}P: T_{f}\mathrm{Rat}_d \to T_{p_{1}(f )}\hat{\mathbb{C}}\times \ldots \times T_{p_{2d-2}(f )}\hat{\mathbb{C}}$$ 
has rank $2d-2$. The kernel of $D_{f }F-D_{f }P$ is the tangent space to the subset of $Rat_d$ which is formed by those maps that are conjugate to $f$ by a M\"obius transformation.
\end{proposition}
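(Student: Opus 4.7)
The plan is to reduce everything to a dimension count plus a duality argument via meromorphic quadratic differentials on $\hat{\mathbb{C}}$. First note that $\dim_{\mathbb{C}} T_f \mathrm{Rat}_d = 2d+1$ while the target has complex dimension $2d-2$, and the M\"obius conjugation orbit through $f$ is $3$-dimensional (the stabilizer of $f$ in $PSL_2(\mathbb{C})$ is finite, since $\deg f \geq 2$). The easy half of the statement — that the M\"obius orbit is contained in $\ker(D_f F - D_f P)$ — is a direct computation: for $g_t = \phi_t \circ f \circ \phi_t^{-1}$ with $\phi_0 = \mathrm{id}$, the hyperbolic continuations satisfy $c_i(g_t) = \phi_t(c_i(f))$ and $p_i(g_t) = \phi_t(p_i(f))$, so $g_t^{n_i}(c_i(g_t)) = \phi_t(f^{n_i}(c_i(f))) = \phi_t(p_i(f)) = p_i(g_t)$, giving $F(g_t) - P(g_t) \equiv 0$. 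Combined with the dimension count, the whole proposition reduces to showing that $D_f F - D_f P$ is \emph{surjective}, equivalently that its transpose $(D_f F - D_f P)^* \colon \bigoplus_{i=1}^{2d-2} T^*_{p_i(f)}\hat{\mathbb{C}} \to T^*_f \mathrm{Rat}_d$ is injective.

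The substance of the proof is to encode cotangent data $\alpha = (\alpha_1,\dots,\alpha_{2d-2})$ at the target as a meromorphic quadratic differential on $\hat{\mathbb{C}}$. I would identify each $\alpha_i$ with the residue of a simple pole of a quadratic differential at $p_i(f)$, and then \emph{pull this pole structure back} along the finite critical orbits $c_i \mapsto f(c_i) \mapsto \dots \mapsto f^{n_i}(c_i) = p_i$. Concretely, I would write the perturbation $\dot{f} \in T_f \mathrm{Rat}_d$ as a meromorphic vector field along $f$, apply the implicit function theorem to compute $\dot{c}_i = -\partial \dot{f}(c_i)/f''(c_i)$ (well-defined because critical points are simple), and then use the chain rule to express $\dot{F}_i - \dot{P}_i$ in terms of the values of $\dot{f}$ along the orbit of $c_i$ and at $p_i$. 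Pairing with $\alpha$ and summing, the resulting linear functional on $\dot{f}$ should take the shape of integration of $\dot{f}$ against a quadratic differential $q(\alpha)$ with poles only on the finite set $\bigcup_i \{c_i, f(c_i),\dots, f^{n_i-1}(c_i), p_i\}$.

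The annihilation condition $(D_f F - D_f P)^*(\alpha) = 0$ should then be equivalent to the statement that $q(\alpha)$ is in fact a \emph{holomorphic} quadratic differential on all of $\hat{\mathbb{C}}$. The cancellation of the pole at $p_i$ is what makes the subtraction of $P$ essential (the residue produced by $\dot{F}_i$ at $p_i$ matches the one produced by $\dot{P}_i$ only when one subtracts), and the hypothesis that the critical points are simple and that no critical point lies in the postcritical set ensures that the pullback of a simple pole along the critical orbit stays a simple pole whose residue is a non-trivial multiple of $\alpha_i$ (so the map $\alpha \mapsto q(\alpha)$ has no kernel among polar parts). Since $H^0(\hat{\mathbb{C}}, K^{\otimes 2}) = 0$, one concludes $q(\alpha) = 0$ and therefore $\alpha = 0$.

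The main obstacle I anticipate is the bookkeeping in the dual construction — in particular, verifying that the formal integration-by-parts really does identify the codifferential of $F-P$ with the residue pairing against $q(\alpha)$, and that no spurious holomorphic contributions arise along the orbit. This is where the hypothesis $f \in \kappa_d^* \setminus \mathcal{L}_d$ is decisive: for a flexible Latt\`es map the underlying elliptic structure produces non-trivial invariant quadratic differentials, i.e., the Thurston pullback operator on quadratic differentials has eigenvalue $1$, and the argument above collapses — which is exactly why the main lemma must first pass, via Corollary \ref{cor-approx}, from $\kappa_d$ to $\kappa_d^* \setminus \mathcal{L}_d$ before invoking this proposition.
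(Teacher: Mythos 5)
Two points. First, the paper does not actually prove this proposition: it is quoted as a known transversality result ("has been proved many times") and cited to \cite{vS00} and \cite{BE09}, so there is no in-paper proof to compare against. Your sketch is in effect a reconstruction of the argument from the cited sources, and you get the overall frame exactly right: $\dim_{\mathbb C} T_f\mathrm{Rat}_d = 2d+1$, the M\"obius orbit is $3$-dimensional and lies in $\ker(D_fF - D_fP)$ by the equivariance computation you give, so the whole statement reduces to surjectivity, i.e. to injectivity of the transpose; and the way into that injectivity is indeed to realize cotangent data as meromorphic quadratic differentials with poles along the finite critical orbits, using that the critical points are simple and disjoint from the postcritical set.

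The step that is not right is the claimed equivalence "$(D_fF - D_fP)^*\alpha = 0$ $\iff$ $q(\alpha)$ is holomorphic on all of $\hat{\mathbb C}$, whence $q(\alpha)=0$ since $H^0(\hat{\mathbb C},K^{\otimes 2})=0$." As stated this would apply verbatim to a flexible Latt\`es map, yet for such an $f$ the operator $D_fF - D_fP$ acquires an extra kernel direction (the tangent to the $1$-parameter Latt\`es family) and its cotranspose is \emph{not} injective. The correct translation of the annihilation condition is not the vanishing of poles but the relation $\nabla_f q(\alpha)=0$, i.e. $f_*q(\alpha)=q(\alpha)$ where $f_*$ is the Thurston pushforward on quadratic differentials; the Latt\`es-invariant quadratic differential is precisely a non-trivial solution of this equation, and it has simple poles on the postcritical set — it is nowhere near globally holomorphic. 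What actually forces $q(\alpha)=0$ for $f\in\kappa_d^*\setminus\mathcal L_d$ is Thurston's contraction principle: $\|f_*q\|_{L^1}<\|q\|_{L^1}$ for every non-zero integrable meromorphic quadratic differential unless $f$ is flexible Latt\`es, so a fixed point of $f_*$ must vanish. Your final paragraph already identifies eigenvalue $1$ of the pushforward as the Latt\`es obstruction, which is logically incompatible with the "holomorphic, hence zero" shortcut two sentences before it. Replace that step with Epstein's $\nabla_f$-injectivity via the $L^1$-contraction, and the sketch becomes a faithful account of the proof the paper is citing.
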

This nice property enables us to have control on the orbits of the critical points while perturbing the dynamics.

\begin{proposition}\label{1dim-famil}
For any map in $f\in\kappa_d^*$ which is not a flexible Latt\`es map, there is a holomorphic, one-dimensional family $\{f_\Lambda\}_{\Lambda \in \mathbb{D}}$ such that $f_0=f$, and except $c_1(f_0)$, the other critical points are persistently preperiodic through this family.
\end{proposition}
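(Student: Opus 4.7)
The plan is to apply Epstein's transversality proposition to only $2d-3$ of the $2d-2$ preperiodicity equations, use the holomorphic implicit function theorem to obtain a smooth submanifold of $\mathrm{Rat}_d$ of complex dimension four, and then slice it with a holomorphic disk transverse to the three-dimensional Möbius conjugacy class of $f$.

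Concretely, after picking local holomorphic charts of $\hat{\mathbb{C}}$ around each $p_j(f)$, I would define on a neighborhood $U$ of $f$ in $\mathrm{Rat}_d$ the holomorphic map
\[
\Phi : U \longrightarrow \mathbb{C}^{2d-3}, \qquad g \longmapsto \bigl(F_j(g) - P_j(g)\bigr)_{j=2,\ldots,2d-2}.
\]
Epstein's proposition says that $D_f(F - P)$ has rank $2d-2$, so dropping the first coordinate leaves $D_f \Phi$ of rank $2d-3$. By the holomorphic implicit function theorem,
\[
\mathcal{V} := \Phi^{-1}(0)
\]
is a smooth complex submanifold of $U$ of dimension $(2d+1) - (2d-3) = 4$.

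The Möbius conjugacy class $[f] := \{\phi \circ f \circ \phi^{-1} : \phi \in \mathrm{PSL}(2,\mathbb{C})\}$ is a three-dimensional complex submanifold of $\mathrm{Rat}_d$ (the stabilizer of $f$ being finite since $\deg f \geq 2$), and it is contained in $\mathcal{V}$, as Möbius conjugation preserves the relations $f^{n_j}(c_j) = p_j$. The strict inequality $\dim_\mathbb{C} \mathcal{V} = 4 > 3 = \dim_\mathbb{C} [f]$ then lets me pick a vector $v \in T_f \mathcal{V} \setminus T_f [f]$ and, working in holomorphic coordinates on $\mathcal{V}$ adapted to $[f]$, produce a holomorphic embedding $\Lambda \mapsto f_\Lambda$ of $\mathbb{D}$ into $\mathcal{V}$ with $f_0 = f$, initial velocity $v$, and transverse to $[f]$.

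By construction $F_j(f_\Lambda) = P_j(f_\Lambda)$ holds for every $\Lambda \in \mathbb{D}$ and each $j = 2, \ldots, 2d-2$, i.e.\ $c_j(f_\Lambda)$ is persistently preperiodic to $p_j(f_\Lambda)$ in $n_j$ iterations. For the remaining critical point $c_1$, Epstein's rank statement together with the implicit function theorem shows that $(F - P)^{-1}(0)$ is a smooth three-dimensional submanifold near $f$ whose tangent space at $f$ is $T_f [f]$, so it coincides locally with $[f]$. Since the disk $\{f_\Lambda\}$ is transverse to $[f]$, it is not contained in $(F - P)^{-1}(0)$, so $F_1(f_\Lambda) - P_1(f_\Lambda)$ does not vanish identically and $c_1(f_\Lambda)$ fails to be persistently preperiodic to $p_1(f_\Lambda)$. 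The only non-routine ingredient in this plan is Epstein's transversality, which is taken as given; the remaining work is the dimension count and choosing the charts around the $p_j(f)$ consistently, both of which are straightforward.
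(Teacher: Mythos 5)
Your argument is correct, and it takes a somewhat different route from the paper's. The paper's proof keeps all $2d-2$ defects $\phi_j\circ F_j-\phi_j\circ p_j$ in a single map $\Phi$, observes that $D_f\Phi$ has full rank $2d-2$ by Epstein's transversality, and invokes the rank theorem to produce a curve $\lambda\mapsto f_\lambda$ with $\Phi(f_\lambda)=(\lambda,0,\ldots,0)$; both the persistence of the relations for $c_2,\ldots,c_{2d-2}$ and the breaking of the relation for $c_1$ then read off immediately from this normal form. You instead freeze only $2d-3$ of the relations to cut out the $4$-dimensional level set $\mathcal{V}$, compare it with the $3$-dimensional M\"obius orbit $[f]\subset\mathcal{V}$, and slice transversally; to conclude that $c_1$'s preperiodicity is broken you then also invoke the \emph{kernel} half of Epstein's proposition ($\ker D_f(F-P)=T_f[f]$), which the paper's proof of this particular step does not need. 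The two arguments rest on the same transversality input. The paper's version is shorter and gives the cleaner parametrization $\phi_1\circ F_1(f_\lambda)-\phi_1\circ p_1(f_\lambda)=\lambda$; yours makes explicit the geometric reason that, after fixing the other critical relations, exactly one complex direction of freedom remains modulo conjugacy. Two small points to tidy in your write-up: the expressions $F_j(g)-P_j(g)$ should be read through the charts $\phi_j$ (as you indicate), and the containment $[f]\subset\mathcal{V}$ only holds for the germ of $[f]$ at $f$, on the neighbourhood $U$ where the sections $c_j$ and $p_j$ are defined.
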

 \begin{proof}
For any $1\leqslant i \leqslant 2d-2$ let the map $\phi _{i}:U_{i}\to \mathbb{C}$ be a local coordinate around $p_{i}(f)$, such that $\phi_{i}(p_{i}(f ))=0.$ Then by the previous proposition the derivative of the map 
$$\Phi:= \begin{pmatrix} 
	 \phi_1 \circ F_1-\phi_1 \circ p_1 \\
	 \vdots \\
	 \phi_{2d-2} \circ F_{2d-2}-\phi_{2d-2} \circ p_{2d-2}
	 \end{pmatrix}.
$$
at $f $ has full rank, so if we denote the $\epsilon$-neighbourhood of zero in the complex plane by $\mathbb{C}_\epsilon$, then by Rank theorem there is a one dimensional holomorphic family $\{f_{\lambda}\}_{\lambda \in \mathbb{D}_{\epsilon}}$ for $\epsilon>0$ sufficiently small, such that
$ \Phi(f_{\lambda})=(\lambda,0,0,\ldots,0).$ So for any $\lambda \in \mathbb{D}_{\epsilon}$ and for any $j\neq 1$ we have
$f_{\lambda}^{n_{j}}(c_{j}(f_{\lambda}))=p_{j}(f_{\lambda}).$ And obviously this equality does not hold true for critical point $c_{1}(f_{\lambda})$. By reparametrizing the family, we obtain a family $\{f_{\lambda}\}_{\lambda \in \mathbb{D}}$ enjoying the desired properties.
\end{proof}

Let us consider a family $\{f_{\lambda}\}_{\lambda \in \mathbb{D}}$ coming from Proposition \ref{1dim-famil}, and denote the bifurcation locus of this family by $B$ recalling that:

\begin{definition}
 The bifurcation locus of a family consists of those parameters that the dynamics is not structurally stable within that family.
\end{definition}

\begin{remark}
The bifurcation locus $B$ is non-empty and in particular contains 0.
\end{remark}
\begin{proof}
The family we are considering is so that $c_1(f_{\lambda})$ is sent to $p_1(f_{\lambda})$ by $n_1$ iteration for $\lambda=0$ , but this does not happen for $\lambda\neq 0$. So $f_0$ is not structurally stable in this family. 
\end{proof}
\begin{remark}
 Since for every $\lambda$ sufficiently close to zero the orbit of each critical point other than $c_1(f_0)$ is finite, $c_1(f_{\lambda})$ is disjoint from the orbit of the other critical points. So by reparameterizing the maps associated to the parameters close to zero, we can assume that every map in the family satisfies this property. This is a technical assumption that we will use later.
\end{remark}

\begin{lemma}\label{argument_normality}
For every periodic point $q(f_0)$ of the map $f_0$, there is a parameter $\lambda^*$ in the bifurcation locus $B$ arbitrary close to zero, such that $c_1(f_{\lambda^*})$ is preperiodic to $q(f_{\lambda^*})$.

\end{lemma}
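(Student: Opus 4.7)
The plan is to combine Montel's theorem with the characterization of the bifurcation locus $B$ as the activity locus of the unique non-persistently-preperiodic critical point $c_1$. For each $n\geq 0$, set $h_n(\lambda):=f_\lambda^n(c_1(f_\lambda))$; these are holomorphic maps $\mathbb{D}\to\hat{\mathbb{C}}$. By the construction of the family in Proposition \ref{1dim-famil}, the critical points $c_j$ for $j\neq 1$ are persistently preperiodic, so by the standard activity/passivity dichotomy for holomorphic families the set $B$ coincides with the locus where $\{h_n\}_{n\geq 0}$ fails to be normal. In particular $\{h_n\}_n$ is non-normal at $\lambda=0$.

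Next, I will produce three pairwise disjoint holomorphic sections $q_0,q_1,q_2:\mathbb{D}_r\to\hat{\mathbb{C}}$ (for some small $r>0$), each taking values in the grand orbit of $q(f_\lambda)$. Since $q(f_0)$ is a repelling periodic point of period $\ell$, its hyperbolic continuation $q_0(\lambda):=q(f_\lambda)$ is well-defined on a neighborhood of $0$ via the implicit function theorem. For $N$ large enough, the $d^N$ preimages of $q(f_0)$ under $f_0^N$ include many points that are not critical values of $f_0^N$ (the latter form a finite set since $f_0$ is postcritically finite), so the implicit function theorem yields two further holomorphic sections $q_1(\lambda),q_2(\lambda)$ of the equation $f_\lambda^N(z)=q_0(\lambda)$, distinct from each other and from $q_0(\lambda)$. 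Shrinking $r$ if necessary, the three sections remain pairwise disjoint on $\mathbb{D}_r$.

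Now assume for contradiction that $h_n(\lambda)\notin\{q_0(\lambda),q_1(\lambda),q_2(\lambda)\}$ for every $\lambda\in\mathbb{D}_r$ and every $n\geq 0$. The Möbius transformation $\varphi_\lambda$ sending the ordered triple $(q_0,q_1,q_2)(\lambda)$ to $(0,1,\infty)$ depends holomorphically on $\lambda$, so $\{\varphi_\lambda\circ h_n\}_n$ is a family of holomorphic maps $\mathbb{D}_r\to\hat{\mathbb{C}}\setminus\{0,1,\infty\}$. By Montel's theorem this family is normal, and composing back with $\varphi_\lambda^{-1}$ yields normality of $\{h_n\}_n$ on $\mathbb{D}_r$, contradicting $0\in B$. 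Hence there exist $\lambda^*\in\mathbb{D}_r$, an integer $n\geq 0$ and an index $i\in\{0,1,2\}$ such that $h_n(\lambda^*)=q_i(\lambda^*)$; since each $q_i(\lambda^*)$ is preperiodic to $q(f_{\lambda^*})$, this exactly says that $c_1(f_{\lambda^*})$ is preperiodic to $q(f_{\lambda^*})$.

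It remains to verify $\lambda^*\in B$. If the holomorphic function $h_n-q_i$ vanishes identically on $\mathbb{D}_r$, then $\lambda^*=0$ already works since $0\in B$. Otherwise $\lambda^*$ is an isolated zero of $h_n-q_i$, and the relation $h_n(\lambda^*)=q_i(\lambda^*)$ forces $h_m(\lambda^*)=q_0(\lambda^*)$ for an infinite arithmetic progression of indices $m$; since $q_0(\lambda^*)$ sits on a repelling cycle of multiplier of modulus $>1$, the derivatives $\partial_\lambda h_m(\lambda^*)$ grow unboundedly along this progression, which prevents $\{h_n\}_n$ from being normal at $\lambda^*$, i.e.\ $\lambda^*\in B$. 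As $r>0$ was arbitrary, this produces $\lambda^*\in B$ arbitrarily close to $0$. The main technical obstacle in the plan is to guarantee that the three preperiodic sections remain pairwise disjoint throughout $\mathbb{D}_r$ and that the chosen inverse branches of $f_\lambda^N$ avoid the critical values; this is straightforward thanks to the hypothesis $f_0\in\kappa_d^*$ (simple critical points, postcritical set disjoint from the critical set).
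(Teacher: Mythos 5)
Your proposal is correct and follows essentially the same strategy as the paper: build three holomorphically-varying marked points in the grand orbit of $q$, normalize by a family of M\"obius maps, apply Montel's theorem to the non-normal critical orbit $\lambda\mapsto f_\lambda^n(c_1(f_\lambda))$, and conclude by the non-persistence of the resulting preperiodicity relation. The only (cosmetic) difference is in how the three sections are produced: you pull back $q_0(\lambda)$ by high iterates via the implicit function theorem at non-critical preimages, whereas the paper takes three preimages of $q$ under a single iterate and checks holomorphic dependence directly from the fact that critical points $c_j$, $j\neq 1$, are persistently preperiodic. One small imprecision in your last step: if the zero of $h_n-q_i$ at $\lambda^*$ has order $k\geq 2$, the first derivatives $\partial_\lambda h_m(\lambda^*)$ do not grow (they all equal $\partial_\lambda q_0(\lambda^*)$); what grows geometrically is the $k$-th Taylor coefficient of $h_m-q_0$ at $\lambda^*$, which still forbids normality, so the conclusion stands but should be phrased in those terms (or, as the paper does, one may simply invoke the standard fact that a non-persistent collision of a critical orbit with a repelling cycle forces the parameter into the bifurcation locus).
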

\begin{figure}\label{Fig-Lambdastar}
    \centering
    \includegraphics[scale=0.5]{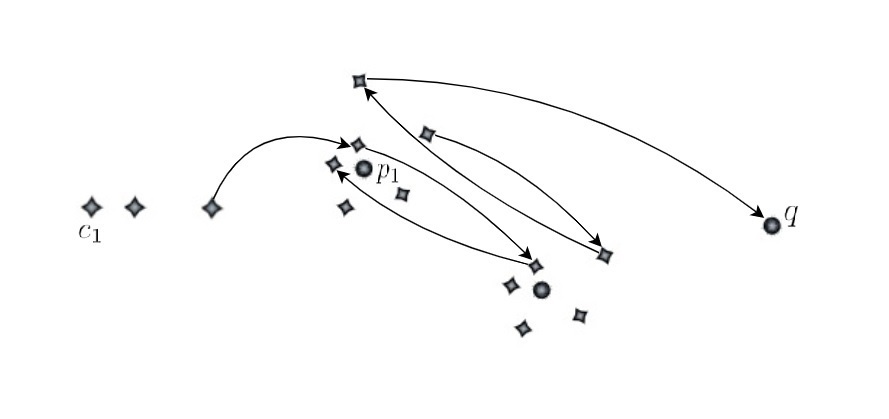}
    \caption{For the map $f_{\lambda^*}$ which is a suitable perturbation of $f_0=f$, the orbit of the critical point $c_1$ is repelled by the cycle of the periodic point $p_1$ and eventually land on $q$ (see lemma \ref{argument_normality}).}
    \label{Lambda-star}
\end{figure} 
\begin{proof}
The proof uses the well known normal family argument. Let $U$ be a small neighbourhood around $0\in \mathbb{D}$. Recalling that the parameter zero is in the bifurcation locus, by Theorem 4.2 of  McMullen's paper \cite{McMullen}, there is $j$ for which the family
$\{\lambda\in U\mapsto f_{\lambda}^n(c_{j}(f_{\lambda}))\}_{n\in \mathbb{N}}$ is not normal. But by Proposition \ref{1dim-famil}, this family is eventually periodic for $j\neq 1$ and hence it is normal. So for $j=1$, it is not normal. Using this we are going to find $\lambda^*$ in $U$ such that $c_{1}(f_{\lambda^*})$ is preperiodic to $q(f_{\lambda^*})$. If this holds for $\lambda^*=0$ we are done. If not:
\begin{claim}
If $c_{1}(f_{0})$ is not preperiodic to $q(f_0)$, then any pre-image of $q(f_0)$ depends holomorphiclly on the parameter in a neighbourhood of zero.
\end{claim}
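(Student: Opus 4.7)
Fix a preimage $z_0$ of $q(f_0)$ under some iterate, say $f_0^k(z_0)=q(f_0)$. The plan is to construct a holomorphic motion $\lambda\mapsto z_0(\lambda)$ near $\lambda=0$ satisfying $f_\lambda^k(z_0(\lambda))=q(f_\lambda)$, by the implicit function theorem (IFT). As a preliminary, I would first note that $q(f_0)$ is a repelling periodic point --- every periodic point of a strictly postcritically finite rational map is repelling, as observed after Proposition \ref{per-meas-density} --- so IFT applied to $f_\lambda^N(z)=z$ at $z=q(f_0)$ produces a holomorphic hyperbolic continuation $\lambda\mapsto q(f_\lambda)$ on a neighbourhood of $0$, where $N$ is the period of $q$.

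The natural move is to apply IFT to $H(\lambda,z):=f_\lambda^k(z)-q(f_\lambda)$ at $(0,z_0)$, working in a local coordinate around $q(f_0)$. By the chain rule,
\[
\partial_z H(0,z_0) \;=\; (f_0^k)'(z_0) \;=\; \prod_{j=0}^{k-1} f_0'\bigl(f_0^j(z_0)\bigr),
\]
which is nonzero unless some $f_0^j(z_0)$ equals a critical point $c_i(f_0)$. In this nondegenerate case, IFT immediately supplies $z_0(\lambda)$.

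In the degenerate case $f_0^j(z_0)=c_i(f_0)$ we have $f_0^{k-j}(c_i(f_0))=q(f_0)$, so $c_i(f_0)$ is preperiodic to $q(f_0)$; the hypothesis of the claim rules out $i=1$. A key second observation is that, because $f_0\in\kappa_d^*$ has postcritical set disjoint from its critical set, the forward orbit of $z_0$ meets $\mathcal{C}(f_0)$ at most once (otherwise a critical point would lie in the forward orbit of another). Hence the critical passage at index $j$ with $i\neq 1$ is unique, and $f_0'\bigl(f_0^l(z_0)\bigr)\neq 0$ for every $l\in\{0,\dots,k-1\}\setminus\{j\}$. Next, since $c_i$ is preperiodic to the cycle of $p_i$ and $q(f_0)$ appears in its forward orbit, the injectivity of the preperiodic tail of $c_i$ (a standard feature of strictly PCF maps) forces $q(f_0)$ to lie on the cycle of $p_i(f_0)$; hyperbolic continuation of that repelling cycle then gives $q(f_\lambda)=f_\lambda^{k-j-n_i}\bigl(p_i(f_\lambda)\bigr)$ persistently. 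Combining with the persistent preperiodicity $f_\lambda^{n_i}\bigl(c_i(f_\lambda)\bigr)=p_i(f_\lambda)$ from Proposition \ref{1dim-famil}, this yields
\[
f_\lambda^{k-j}\bigl(c_i(f_\lambda)\bigr)=q(f_\lambda) \quad\text{for all $\lambda$ near $0$.}
\]
I would then apply IFT to the equation $f_\lambda^j(z)=c_i(f_\lambda)$ at $(0,z_0)$; this is legal since $(f_0^j)'(z_0)\neq 0$ by the uniqueness of the critical passage, and it produces a holomorphic $z_0(\lambda)$ with $f_\lambda^j(z_0(\lambda))=c_i(f_\lambda)$. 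Composing gives $f_\lambda^k(z_0(\lambda))=q(f_\lambda)$, as required.

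\textbf{Main obstacle.} The delicate step will be establishing the uniqueness of the critical passage and the ``$q$ on the cycle of $p_i$'' dichotomy; both use the $\kappa_d^*$ hypothesis essentially and require handling the edge cases $z_0=c_{i'}$ and $j=0$ carefully. Once these structural facts are in place, the rest is routine: two applications of IFT stitched together through the persistent preperiodic structure provided by Proposition \ref{1dim-famil}.
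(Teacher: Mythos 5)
Your proof is correct and matches the paper's approach: both split into the non-degenerate case (where the implicit function theorem applies directly to $f_\lambda^k(z)=q(f_\lambda)$) and the degenerate case where the orbit from $q'(f_0)$ to $q(f_0)$ passes through a unique critical point $c_i(f_0)$ with $i\neq 1$ (uniqueness coming from the $\kappa_d^*$ hypothesis that the postcritical set avoids the critical set), and both then invoke the persistent preperiodicity furnished by Proposition \ref{1dim-famil} to continue the preimage of $c_i(f_\lambda)$ holomorphically. You are somewhat more explicit than the paper on one intermediate point — namely that since $q(f_0)$ is periodic and lies on the forward orbit of $c_i(f_0)$, it must sit on the cycle of $p_i(f_0)$, so that the persistent relation $f_\lambda^{n_i}(c_i(f_\lambda))=p_i(f_\lambda)$ really does yield persistent preperiodicity of $c_i(f_\lambda)$ to $q(f_\lambda)$ — but this is a detail the paper leaves implicit rather than a different argument.
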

\begin{proof}
Take $q'(f_0)$ to be a pre-image of $q(f_0)$. If $q'(f_0)$ does not meet any critical point before landing on $q(f_0)$, obviously it depends analytically on the parameter. Otherwise there exists $j\neq 1$ such that $q'(f_0)$ is sent to $c_j(f_0)$ and $c_j(f_0)$ is sent to $q(f_0)$. Proposition \ref{1dim-famil} implies that for every parameter $\lambda \in \mathbb{D}$, $c_j(f_{\lambda})$ is preperiodic to $q(f_{\lambda})$, and so $q'(f_{\lambda})$ is indeed a preimage of $c_j(f_{\lambda})$. Since the latter depends analyticly on the parameter, its preimage depends analyticly as well.  
\end{proof} 
Now take $q_{1}(f_0),q_{2}(f_0)$ and $q_{3}(f_0)$ to be three distinct preimages of $q(f_0)$. There exists an analytic family of M\"obius maps $\Gamma_{\lambda}$ sending back the continuation of these three preimages to themselves: $$\Gamma_{\lambda}(q_{m}(f_{\lambda)}))=q_{m}(f_{\lambda})\quad m\in \{1,2,3\}.$$
Since composing with M\"obius maps does not affect normality, the family $\{\lambda\in U \mapsto\Gamma_{\lambda}^{-1}(f_{\lambda}^n(c_{1}(f_{\lambda})))\}_{n\in\mathbb{N}}$ is not a normal family as well, so by Montel's theorem, it cannot avoid all of the three points $q_{1}(f_0),q_{2}(f_0)$ and $q_{3}(f_0)$. Hence, there is a parameter $\lambda^*$, a natural number $l \in \mathbb{N}$ and $ m\in \{1,2,3\}$ such that the following equality holds:
\begin{equation}\label{montel}
\Gamma_{\lambda^*}^{-1}(f_{\lambda^*}^l(c_{1}(f_{\lambda^*})))=q_{m}(f_0).
\end{equation}
 So $f_{{\lambda^*}}^l(c_{1}({\lambda^*}))=q_{m}(f_{\lambda^*})$ which means the critical point $c_{1}(f_{\lambda^*})$ is preperiodic to $q(f_{\lambda^*})$. 

To prove that the parameter ${\lambda^*}$ is in the bifurcation locus, note that the equation (\ref{montel}) cannot holds true in a neighbourhood of ${\lambda^*}$, since otherwise it holds true for any parameter in $U$ but we have assumed that $c_{1}(f_{0})$ is not preperiodic to $q(f_0)$.
\end{proof}
Now let the parameter $\lambda^*$ is chosen so that $c_1(f_{\lambda^*})$ is preperiodic to the periodic point $q(f_{\lambda^*})$ which is the continuation of the periodic point $q$ in the statement of the main lemma. 
\begin{lemma}\label{lemmaparabolicreturn}
Arbitrary close to the parameter $\lambda^*$, there is a parameter $\hat{\lambda}$ such that $f_{\hat{\lambda}}$ has a parabolic periodic point $\hat{q}(f_{\hat{\lambda}})$ and the invariant probability measure supported on the orbit of $\hat{q}(f_{\hat{\lambda}})$ is arbitrary close to the invariant probability measure supported on the orbit of $q(f_{\lambda^*})$.
\end{lemma}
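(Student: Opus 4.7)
The strategy is first to produce a sequence of super-attracting parameters $\lambda_j\to\lambda^*$ at which $c_1(f_{\lambda_j})$ itself becomes periodic of period $l+jm$, and then to deform each $\lambda_j$ within its hyperbolic component to reach a parabolic parameter $\hat\lambda_j$. The uniform probability measure on the super-attracting orbit of $c_1(f_{\lambda_j})$ will be close, for large $j$, to $e_\infty^{f_{\lambda^*}}(q)$, and this will be inherited by the parabolic orbit at $\hat\lambda_j$.

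Let $m$ denote the period of $q=q(\lambda^*)$ and $\rho(\lambda)$ the multiplier of its repelling continuation near $\lambda^*$. Fix a holomorphic family of linearizing coordinates $\phi_\lambda$ conjugating $f_\lambda^m$ to multiplication by $\rho(\lambda)$ in a fixed neighborhood of $q(\lambda^*)$. The analytic function $\psi(\lambda):=\phi_\lambda(f_\lambda^l(c_1(f_\lambda)))$ vanishes at $\lambda^*$ but, by the construction of the family $\{f_\lambda\}$, is not identically zero. Solving the equation $f_\lambda^{l+jm}(c_1(f_\lambda))=c_1(f_\lambda)$ -- either by running the Montel-moving-targets argument of Lemma \ref{argument_normality} with $\lambda\mapsto c_1(f_\lambda)$ as one of the three holomorphic sections, or directly by an implicit function argument using the linearization -- produces, for all sufficiently large $j$, a super-attracting parameter $\lambda_j$ with $|\lambda_j-\lambda^*|=O(|\rho|^{-j/k})$, where $k$ is the order of vanishing of $\psi$ at $\lambda^*$. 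The super-attracting orbit of $c_1(f_{\lambda_j})$ consists of $l$ transient points followed by $jm$ points clustered geometrically around the cycle of $q(f_{\lambda_j})$, so its uniform probability measure converges weak-$*$ to $e_\infty^{f_{\lambda^*}}(q)$.

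To pass from centers to parabolic parameters, I use that each $\lambda_j$ is the center of a hyperbolic component $H_j\subset\mathbb{D}$ of the family $\{f_\lambda\}$, on which the multiplier of the continuation of the super-attracting orbit defines a holomorphic function $\mu_j:H_j\to\mathbb{D}$ with $\mu_j(\lambda_j)=0$. By the standard theory of hyperbolic components in one-parameter holomorphic families of rational maps (see Milnor's book on complex dynamics), $\mu_j$ extends continuously to $\overline{H_j}$ with $|\mu_j|=1$ on $\partial H_j$ and hits every value on the unit circle; I then pick $\hat\lambda_j\in\partial H_j$ with $\mu_j(\hat\lambda_j)=1$, giving a parabolic periodic orbit $\hat q(f_{\hat\lambda_j})$ of period $l+jm$. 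Since the continuation of the periodic orbit depends continuously on $\lambda\in\overline{H_j}$, its orbit at $\hat\lambda_j$ stays uniformly close to the super-attracting orbit at $\lambda_j$, so its uniform probability measure remains close to $e_\infty^{f_{\lambda^*}}(q)$ for large $j$.

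The main obstacle is to guarantee that $\hat\lambda_j$ itself lies close to $\lambda^*$, equivalently that $\mathrm{diam}(H_j)\to 0$ as $j\to\infty$. The linearization suggests that near $\lambda_j$ the multiplier behaves like $\mu_j(\lambda)\sim\rho(\lambda)^{j}(\lambda-\lambda_j)^{k}$, forcing $\mu_j^{-1}(\overline{\mathbb{D}})$ inside a disk of radius $O(|\rho|^{-j/k})$ around $\lambda_j$; this is the self-similarity scaling around Misiurewicz parameters due to Tan Lei and McMullen, and making it quantitatively precise in the present one-parameter family is the technical heart of the proof.
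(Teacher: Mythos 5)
Your route is genuinely different from the paper's and the gap you flag is real, not a formality. The paper does not pass through centers of hyperbolic components at all. It works directly: after normalizing so that $q$ is a fixed point and $f_\lambda$ is linear near $q$, it picks a preimage $\tilde c$ of $c_1$ lying inside the linearization domain, writes the Taylor expansion $f_\lambda^N(\tilde c+z)=q+A_\lambda+B_\lambda z^2+\cdots$ (quadratic because $\tilde c$ meets exactly one simple critical point before landing on $q$, and $A_\lambda=\lambda^j\hat A(\lambda)$ by transversality), composes with $n$ steps of the linear dynamics, and then solves the two scalar equations $f_\lambda^{N+n}(\tilde c+z)=\tilde c+z$ and $(f_\lambda^{N+n})'(\tilde c+z)=1$ simultaneously for $(\lambda,z)$: the multiplier equation gives $z=z_n(\lambda)=O(\gamma_\lambda^{-n})$ by a contraction argument, and plugging into the fixed-point equation gives $\lambda^j=H_n(\lambda)$, solvable by Hurwitz for large $n$ with $\lambda_n\to 0$. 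The estimate $|A_{\lambda_n}|=O(\gamma^{-n})$ then directly bounds how far $f^N_{\lambda_n}(\tilde c+z_n)$ is from $q$, so the parabolic orbit spends $n-O(1)$ of its $n+N$ iterations near $q$. The quantitative closeness of parameters and of the periodic measure is thus built into the construction; no appeal to the theory of hyperbolic components or to parameter self-similarity is needed.

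Your proposal is structurally coherent but leaves the decisive quantitative estimate unproved, and you say so yourself. Showing $\mathrm{diam}(H_j)\to 0$ with the rate needed to conclude both that $\hat\lambda_j\to\lambda^*$ \emph{and} that the period-$(l+jm)$ orbit moves by $o(1)$ uniformly over $\overline{H_j}$ (note that continuity of a long orbit in $\lambda$ is not automatically uniform in the period $j$) is exactly the content one would have to extract from Tan Lei/McMullen-type self-similarity, and in a general one-parameter family of rational maps this is not a citation but a proof. There is also a secondary soft spot: your mechanism for producing the centers $\lambda_j$ is underdeveloped. Feeding $\lambda\mapsto c_1(f_\lambda)$ in as one of the three targets in the Montel argument of Lemma~\ref{argument_normality} does not by itself produce a parameter where the orbit hits $c_1$ (you would need to control which of the three targets is hit, or use three distinct holomorphic sections of preimages of $c_1$), and the claimed period $l+jm$ ignores the transit time back from the linearization domain to $c_1$. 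None of these are fatal to the strategy, but together they mean the proof is not actually carried out. If you do want to keep the centers-to-roots route, the cleanest way to make it rigorous is probably to run the same implicit-function/linearization computation as the paper with the target condition $f_\lambda^{N+n}(\tilde c+z)=\tilde c+z$, $(f_\lambda^{N+n})'=0$, which produces the centers with an explicit $O(\gamma^{-n})$ location, and then a second perturbation to move the multiplier to $1$; at that point you have essentially reconstructed the paper's argument, which suggests the direct construction of the parabolic cycle is the shorter path.
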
 

\begin{proof}
For simplicity, after reparametrizing the family, we may assume that $\lambda^*$ is equal to zero. Without loss of generality, we may also assume that the period of $q(f_{\lambda})$ is equal to one and so it is a fixed point. Otherwise we can repeat the following arguments for a family formed by an iteration of $f_{\lambda}$. Conjugating the family by M\"obius maps, we can assume that $q$ remains a fixed point for all maps in this family. Up to a holomorphic change of local coordinates we can also assume that $f_\lambda$ is linear in a neighbourhood of $q$ and has the following form:
\begin{align}\label{linearization}
f_\lambda(q+z)=\gamma_\lambda z+q,
\end{align} 
where $\gamma_\lambda$ is the multiplier of the repelling fixed point $q$ for the map $f_\lambda$. 
\begin{figure}\label{Fig-Lambda-hat}
    \centering
    \includegraphics[scale=0.5]{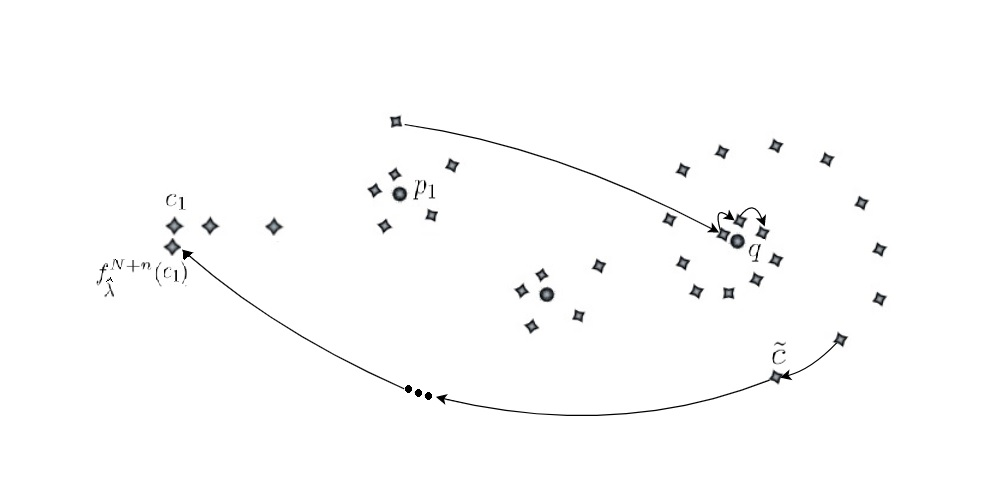}
    \caption{For the map $f_{\hat{\lambda}}$ which is a suitable perturbation of $f_{\lambda^*}$, the orbit of the critical point $c_1$, after staying a long time close to $q$, returns to a small neighbourhood of itself and a parabolic periodic point appears which shadows the orbit of the critical point (see lemma \ref{lemmaparabolicreturn}).}
    \label{Lambda-hat}
\end{figure} 
Next note that since for the map $f_0$ the pre-images of any point accumulates to any point in the Julia set, and the Julia set is the whole Riemann sphere, arbitrary close to $q$, there are pre-images of the critical point $c_1(f)$. We choose one of this pre-images $\tilde{c}$, which is in the linearization domain of $q$. We can also assume the change of coordinates around $q$ is so that the point $\tilde{c}$ stays a preimage of $c_1(f_\lambda)$ for $\lambda$ close to zero. 

Since $c_1(f_0)$ is preperiodic to $q$, there is a natural number $N\in \mathbb N$ such that $f_0^N(\tilde{c})=q$, and also since $\tilde{c}$ meets only one critical point $c_1(f_0)$ (which is simple) before landing on $q$, the Taylor expansion of $f^N_\lambda(z)$ around $z=\tilde{c}$ and $\lambda=0$ has the following form:
 \begin{align}\label{taylor exp}
 f^N_\lambda(\tilde{c}+z)= q+A_\lambda+B_\lambda z^2+z^3\epsilon_\lambda(z),
 \end{align}
where $A_\lambda$, $B_\lambda$ and $\epsilon_\lambda(z)$ depend holomorphically on $\lambda$ and $z$. $A_\lambda$ is zero at $\lambda=0$ but it is not identically zero in a neighbourhood of $\lambda=0$. This is true because $c_1(f_\lambda)$ is not persistently prepriodic to $q$, and so $A_\lambda=\lambda^j\hat{A}(\lambda),$ for some holomorphic map $\hat{A}$ with $\hat{A}(0)\neq 0$ and for some natural number $j\in \mathbb{N}$. 
 On the other hand since $\tilde{c}$ meets only one critical point, which is simple, before landing on $q$, there is no first order term in equation \ref{taylor exp} and also $B_0\neq 0$.
 
 By equation \ref{linearization} 
\begin{align}
f^{N+n}_\lambda(\tilde{c}+z)=q+\gamma_\lambda^n A_\lambda + \gamma_\lambda^n B_\lambda z^2+ \gamma_\lambda^n z^3 \epsilon_\lambda(z).
\end{align} 
Now for each $n\gg 1$, we are going to find a parameter $\lambda_n$ close to zero such that the map $f_{\lambda_n}$ has a parabolic periodic point close to $\tilde{c}$ with period $n+N$ and multiplier equal to one. We find this parameter so that the parabolic periodic point spends most of its time close to the fixed point $q$. For this purpose we need to solve the following system of equations:
\begin{align}
& f_{\lambda}^{N+n}(\tilde{c}+z)=\tilde{c}+z, \label{fixed.pt.equation} \\
 & \large{(}f_\lambda^{n+N}\large{)}'(\tilde{c}+z)=2\gamma_\lambda^n B_\lambda z+ 3\gamma_\lambda^n z^2 \epsilon_\lambda(z)+\gamma_\lambda^n z^3 \large{(}\epsilon_\lambda\large{)}'(z)=1.\label{multipelier.equ}
\end{align} 
From the second equation we obtain 
\begin{align}\label{unif.contracting}
z=\frac{1}{2\gamma_\lambda^n B_\lambda}-\frac{3z^2\epsilon_\lambda(z)-z^3\epsilon_\lambda '(z)}{2B_\lambda}:=G_{n,\lambda}(z).
\end{align}
Using this equation we can find $z$ implicitly in terms of $\lambda$ . Fix a sufficiently small neighbourhood $\mathcal{U}$ of $\lambda=0$ and a small neighbourhood $W$ of $z=0$ such that for large $n$ and for any $\lambda\in \mathcal{U}$ the map $G_{n,\lambda}$ is uniformly contracting on $W$. So for each $n$ and $\lambda$ the map $G_{n,\lambda} $ has a unique fixed point $z_n(\lambda)$. To estimate the norm of this fixed point, using the equation \ref{unif.contracting} we obtain 
$$z_n(\lambda)(1+\frac{3z_n(\lambda)\epsilon_\lambda(z_n(\lambda))-z_n^2(\lambda) \epsilon_\lambda '(z_n(\lambda))}{2B_\lambda})=\frac{1}{2\gamma_\lambda^n B_\lambda},$$
so the norm of $z_n(\lambda)$ is of $O(\frac{1}{|\gamma_\lambda^n|})$. 
Now to find $\lambda_n$ we insert $z_n(\lambda)$ into equation \ref{fixed.pt.equation}: 
$$\frac{z_n(\lambda)+\tilde{c}-q}{\gamma_\lambda ^n}-B_\lambda z_n^2(\lambda)- z_n^3(\lambda) \epsilon_\lambda(z_n(\lambda))=A_\lambda=\hat{A}_\lambda \lambda ^ j.$$
So 
\begin{align}\label{lambda_n.equation}
\lambda ^ j= \frac{1}{\hat{A}_\lambda}\bigg(\frac{\tilde{c}-q}{\gamma_\lambda^n }+\frac{z_n(\lambda)}{\gamma_\lambda^n}-B_\lambda z_n^2(\lambda)-z_n^3(\lambda)\epsilon_\lambda(z_n(\lambda))\bigg):=H_n(\lambda).
\end{align}
since the sequence of maps $\lambda^j-H_n(\lambda)$ converges uniformly on $\mathcal{U}$ to the map $\lambda ^ j$, by Hurwitz theorem we conclude that for $n$ large enough, the equation $\lambda^j-H_n(\lambda)=0$, has $j$ solutions counted with multiplicity. Let $\lambda_n$ be one of these solutions. The pair $(\lambda_n,z(\lambda_n))$ solves both equations \ref{fixed.pt.equation} and \ref{multipelier.equ} so $z_n(\lambda_n)$ is a parabolic periodic point of $f_{\lambda_n}$ with period $n+N$. It remains to show that this periodic point spends most of its time close to the fixed point $q$.

Considering the fact that the norm of $z_n(\lambda)$ is of $O(\frac{1}{|\gamma_\lambda^n|})$ the equation \ref{lambda_n.equation} implies that the norm of $\lambda _n ^ j $ and hence the norm of $A_{\lambda_n}$ are of $O(\frac{1}{|\gamma_\lambda^n|})$ and so the distance between $f^N _{\lambda_n}(\tilde{c}+z_n(\lambda_n))$ and the fixed point $q$ is of this order as well. This shows that the orbit of $z_n(\lambda_n)$ stays $n-O(1)$ iterations close to $q$. Note that since $N$ is fixed, by increasing $n$ the proportion of times that this parabolic periodic point spends close to $q$ tends to 1 and so we are done. 
\end{proof}

The following lemma describes the statistical behavior of Lebesgue a.e. point for the dynamics $f_{\hat \lambda}$, where the parameter $\hat{\lambda}$ is given by Lemma \ref{lemmaparabolicreturn}.
\begin{lemma}\label{Urbanski}
Under the iteration of the map $f_{\hat{\lambda}}$ the empirical measures of Lebesgue almost every point converges to the invariant probability measure supported on the orbit of the parabolic periodic point $\hat{q}(f_{\hat{\lambda}})$ .
\end{lemma}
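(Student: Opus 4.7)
The plan is to verify that $f_{\hat\lambda}$ is a geometrically finite rational map whose only non-repelling cycle is the parabolic orbit of $\hat q$, and then to invoke the classical ergodic theory of such maps (the Denker--Urbanski theory suggested by the name of the lemma) to read off the convergence of empirical measures at Lebesgue-a.e. point.

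First I would describe the critical dynamics of $f_{\hat\lambda}$. By Proposition \ref{1dim-famil}, for every parameter in the one-dimensional family the critical points $c_j(f_{\hat\lambda})$ with $j\neq 1$ are persistently preperiodic to the repelling cycles through $p_j(f_{\hat\lambda})$ and hence lie in the Julia set. By the Fatou--Shishikura inequality, the parabolic cycle of $\hat q(f_{\hat\lambda})$ must absorb at least one critical point in its basin; since the other critical points are in $J(f_{\hat\lambda})$, the remaining critical point $c_1(f_{\hat\lambda})$ must lie in the parabolic basin (this is also consistent with the construction in Lemma \ref{lemmaparabolicreturn}, where the parabolic point is produced precisely by controlling the return of the $c_1$-orbit close to itself). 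Consequently $f_{\hat\lambda}$ has no attracting cycles, no Siegel disks and no Herman rings, and by Sullivan's no-wandering-domain theorem the Fatou set coincides with the parabolic basin of $\hat q$.

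Next I would invoke the ergodic theory of geometrically finite rational maps to conclude that $J(f_{\hat\lambda})$ has zero Lebesgue measure. Under the hypotheses just established (the only non-repelling cycle is parabolic, and each Fatou critical point is captured by its basin), the Denker--Urbanski and Stratmann--Urbanski construction produces a conformal measure on $J(f_{\hat\lambda})$ whose exponent equals the Hausdorff dimension of the Julia set; this dimension is strictly less than $2$, flexible Latt\`es maps being the only obstruction, and those are ruled out by the hypothesis $f\in \kappa^*_d\setminus \mathcal L_d$ of the Main lemma (stability of this property under our small perturbation is a straightforward check). Hence Lebesgue-a.e.\ point of $\hat{\mathbb C}$ lies in the parabolic basin. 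Finally, the Fatou-coordinate picture on each attracting petal of $\hat q(f_{\hat\lambda})$ shows that any orbit in the parabolic basin converges to the parabolic cycle and that the number of iterates spent outside an arbitrary relative neighborhood of the cycle is sublinear, so that the empirical measures of any such orbit converge weakly to the uniform invariant probability measure $\frac{1}{m}\sum_{i=0}^{m-1}\delta_{f^i_{\hat\lambda}(\hat q)}$, where $m$ is the period of $\hat q$. Combining these two facts yields the lemma.

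The main obstacle is the first point of step two: proving $\mathrm{Leb}(J(f_{\hat\lambda}))=0$. Julia sets of rational maps can carry positive Lebesgue measure in general (Buff--Ch\'eritat), so this relies crucially on the geometrically finite structure established in step one together with the non-Latt\`es hypothesis, via the conformal-measure/topological-pressure machinery adapted to the parabolic setting. Once Lebesgue-typical orbits are known to fall into the parabolic basin, the statistical statement is a standard consequence of the Fatou coordinates, and the proof is complete.
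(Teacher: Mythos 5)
Your argument follows essentially the same path as the paper's: establish that the Fatou set of $f_{\hat\lambda}$ is exactly the parabolic basin of $\hat q$ (using that each immediate parabolic basin must contain a critical point, which here can only be $c_1$ since the others are preperiodic to repelling cycles and hence in the Julia set, and ruling out the other periodic Fatou component types via Sullivan's classification), deduce that the Julia set has zero Lebesgue measure, and conclude from the dynamics inside the basin. One small correction to your step on $\mathrm{HD}(J(f_{\hat\lambda}))<2$: the relevant obstruction is not that $f_{\hat\lambda}$ might be a flexible Latt\`es map, but simply that $J$ might equal $\hat{\mathbb C}$; you have already excluded this by showing the Fatou set is nonempty, and the non-Latt\`es hypothesis plays no role at this stage --- the paper cites Przytycki's theorem that a rational map with no recurrent critical point has either $\mathrm{HD}(J)<2$ or $J=\hat{\mathbb C}$, and your appeal to the Denker--Urba\'nski/geometrically-finite theory works equally well once phrased in terms of $J\neq\hat{\mathbb C}$ rather than ``not Latt\`es.''
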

\begin{proof}
Let $\tilde{U}$ be an immediate basin of attraction of the parabolic periodic point $\hat{q}(f_{\hat{\lambda}})$. By Theorem 10.15 in \cite{Mi06}, the domain $\tilde{U}$ contains a critical point of the map $f_{\hat{\lambda}}$. The only critical point which can live in $\tilde{U}$ is $c_{1}(\hat \lambda)$, because the other ones are preperiodic to repelling periodic points and so are in the Julia set. Assume for the sake of contradiction that there exists a Fatou component $\tilde{V}$ of $f_{\hat{\lambda}}$ which has an orbit disjoint from $\tilde{U}$. By Sullivan's classification of Fatou components for rational maps, the domain $\tilde{V}$ should be a preimage of a periodic Fatou component $\tilde{W}$. The component $\tilde{W}$ cannot be neither a component of the immediate attracting basin of an attracting periodic point nor a component of an immediate attracting basin of a parabolic periodic point, because otherwise it should contain a critical point other than $c_1(\hat \lambda)$ in its forward orbit which is not possible. Since the boundary of a Siegel disk or a Herman ring is accumulated by the orbit of a critical point, the component $\tilde{W}$ cannot be neither of these cases as well. But these are the only possible cases, which is a contradiction. 

Consequently, the set $\bigcup_{n\geqslant 0}f_{\hat{\lambda}}^{-n}(\tilde{U})$ is the whole Fatou set. 
Next note that every critical point of $f_{\hat{\lambda}}$ is non-recurrent. In \cite{przytycki2001porosity} it is proved that a rational map with no recurrent critical point has a Julia set with Hausdorff dimension less than two or a Julia set equal to $\hat{\mathbb{C}}$. 
As the Fatou set of $f_{\hat \lambda}$ is non empty, the Julia set of $f_{\hat \lambda}$ has Hausdorff dimension less than two and in particular has zero Lebesgue measure. This means that almost every point $x\in \hat{\mathbb{C}}$ eventually fall into $\tilde{U}$ , and will be attracted by the orbit of $\hat{q}(f_{\hat{\lambda}})$.
\end{proof}
\begin{remark}
The map $f_{\hat{\lambda}}$ is in the set $\overline{\kappa}_d$. 
\end{remark}
\begin{proof}
 Since the parabolic periodic point for $f_{\hat{\lambda}}$ is not persistent, the parameter $\hat{\lambda}$ is in the bifurcation locus $B$. So again using a normal family argument as in Lemma \ref{argument_normality}, it can be shown that $f_{\hat{\lambda}}$ is approximated by maps $f_{\lambda}$, for which the critical point $c_1(f_{\lambda})$ is preperiodic to a repelling periodic point. This means that $f_{\lambda}\in \kappa_d$ and hence $f_{\hat{\lambda}}\in \overline{\kappa}_d$. 
\end{proof}

To finish the proof of the main lemma, note that by Lemma \ref{Urbanski} and Lemma \ref{lemmaparabolicreturn}, the limit of the empirical measures of almost every point for the map $f_{\hat{\lambda}}$ is close to $e^f_{\infty}(q)$. And moreover, by the previous remark, $f_{\hat{\lambda}}$ is in $\overline{\kappa}_d$. So the map $f$ statistically bifurcates toward $\delta_{e^f_{\infty}(q)}$ with perturbations in $\overline{\kappa}_d$. 
\end{proof}
\section{Periodic measures are dense in $\mathcal M_1(f)$}\label{sec.density-per-meas}
The aim of this section is to prove Proposition \ref{per-meas-density}. Through out this section we assume that $f$ is a strictly postcritically finite rational map of degree $d\geq 2$. Since $f$ has no periodic critical point, it has at least one critical point $c\in \mathcal{C}(f)$ which is not in the post critical set $\mathcal{P}(f)$. So the set $f^{-1}(\{c\})$ has $d$ elements, and since $d\geq2$, the set $\mathcal A:=\mathcal{P}(f)\cup \mathcal{C}(f)\cup f^{-1}(\{c\})$ has at least three elements. The Riemann surface $\hat{\mathbb{C}}\setminus \mathcal A $ is hence a hyperbolic Riemann surface and has the Poincar\'e disk $\mathbb{D}$ as a universal cover. Let us fix a covering map $\pi:\mathbb{D}\to \hat{\mathbb{C}}\setminus \mathcal A $. 

For any point $x\in \hat{\mathbb{C}}\setminus \mathcal A $ and any of its $d$ preimages $y$, the map $f$ is a local diffeomorphism from a neighborhood of $y$ onto a neighborhood of $x$. Thus its inverse branch is well defined and 
 can be locally lifted to the universal covering. We claim that this map can be extended to a map $F:\mathbb{D}\to\mathbb{D}$ satisfying the following property:
\begin{align}\label{lift-of-inv-branch}
 f\circ \pi \circ F=\pi. 
\end{align}

To see this, choose $\tilde{x}\in \pi^{-1}(\{x\})$ and $\tilde{y}\in \pi^{-1}(\{y\})$, and define $F(\tilde{x})=\tilde{y}$. To define $F$ on an arbitrary point $\tilde{z}\in \mathbb{D}$, consider a curve $\gamma:[0,1]\to\mathbb{D}$ with $\gamma(0)=\tilde{x}$ and $\gamma(1)=\tilde{z}$. Then by projecting this curve to $\hat{\mathbb{C}}\setminus \mathcal A $ and using the continuation of the inverse branch sending $x$ to $y$, we obtain a curve in $\hat{\mathbb{C}}\setminus \mathcal A $ starting at $y$ and ending at a point in $f^{-1}(\{\pi(\tilde{z})\})$. This new curve has a lift to the universal cover, which starts at $\tilde{y}$. We define $F(\tilde{z})$ as the endpoint of the latter curve. The map $F$ is well defined since for any other curve ${\gamma}'$ joining $\tilde{x}$ to $\tilde{z}$, the loop ${(\gamma}')^{-1}\circ \gamma$ is contractible in $\mathbb{D}$, so its projection $\pi({(\gamma}')^{-1}\circ \gamma)$, is a contractible loop in $\hat{\mathbb{C}}\setminus \mathcal A$ as well. The inverse image of this loop under the continuation of the branch of $f^{-1}$ sending $x$ to $y$ is then contractible in $\hat{\mathbb{C}}\setminus \mathcal A$, and so lifts to a closed loop in $\mathbb{D}$, starting from $\tilde{y}$. This Shows that we obtain the same points for $F(\tilde{z})$ using both $\gamma$ and $\gamma'$, and hence $F$ is well defined. By definition, it is obvious that equation (\ref{lift-of-inv-branch}) holds for $F$.

We denote the hyperbolic metric on the Poincar\'e disk $\mathbb{D}$ by $\tilde{d}_h$.
Recall that any Deck transformation of the covering $\pi : \mathbb D \to \hat{\mathbb{C}}\setminus \mathcal A$ is a biholomorphism, and so it leaves invariant the Poincar\'e metric $\tilde{d}_h$. Thus we can push forward the metric $\tilde{d}_h$ and obtain a metric $d_h$ on $\Hat{\mathbb{C}}\setminus \mathcal A$.
\begin{lemma}
For the metric $\tilde{d}_h$, the derivative $DF(z)$ is contracting at every $z\in \mathbb{D}$.
\end{lemma}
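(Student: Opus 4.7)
The plan is to recognize $F$ as a holomorphic self-map of $\mathbb{D}$ and then apply the Schwarz-Pick lemma, showing that $F$ is not a M\"obius automorphism so that the contraction of $\tilde{d}_h$ is strict at every point.

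First I would verify that $F$ is holomorphic on $\mathbb{D}$. Locally $F$ reads as $\sigma \circ g \circ \pi$, where $g$ is a holomorphic inverse branch of $f$ (which is well defined on suitable simply-connected open subsets of $X := \hat{\mathbb{C}} \setminus \mathcal A$, since $\mathcal A$ contains all critical values in view of $f(\mathcal C(f)) \subset \mathcal P(f) \subset \mathcal A$) and $\sigma$ is a local holomorphic section of the covering $\pi$. All three factors are holomorphic, hence so is $F$.

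I would then invoke the dichotomy in the Schwarz-Pick lemma: a holomorphic self-map of $\mathbb{D}$ is either a M\"obius automorphism, in which case it is a $\tilde{d}_h$-isometry, or else it is a strict contraction of $\tilde{d}_h$ at every point of $\mathbb{D}$. It therefore suffices to rule out that $F$ is an automorphism. To this end, set $Y := \hat{\mathbb{C}} \setminus f^{-1}(\mathcal A)$. The defining relation $f \circ \pi \circ F = \pi$ forces $\pi(F(z)) \in f^{-1}(X) = Y$ for every $z \in \mathbb{D}$, so the image of $\pi \circ F$ lies in $Y$. On the other hand, $\mathcal A$ is forward invariant under $f$, because $f(\mathcal P(f)) \subset \mathcal P(f)$, $f(\mathcal C(f)) \subset \mathcal P(f)$ and $f(f^{-1}(\{c\})) = \{c\} \subset \mathcal C(f)$, so $\mathcal A \subset f^{-1}(\mathcal A)$. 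The inclusion is in fact strict: by Riemann-Hurwitz one has $|f^{-1}(\mathcal A)| \geq d|\mathcal A| - (2d-2)$, and since $|\mathcal A| \geq 3$ and $d \geq 2$ this strictly exceeds $|\mathcal A|$. Thus $Y \subsetneq X$.

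If $F$ were an automorphism of $\mathbb{D}$ then $F(\mathbb{D}) = \mathbb{D}$, so $\pi(F(\mathbb{D})) = \pi(\mathbb{D}) = X$, contradicting $\pi(F(\mathbb{D})) \subset Y \subsetneq X$. Hence $F$ is a non-automorphic holomorphic self-map of $\mathbb{D}$, and the strict form of Schwarz-Pick yields $\|DF(z)\|_{\tilde{d}_h} < 1$ for every $z \in \mathbb{D}$, as required.

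The only real subtlety I anticipate is securing the strict inclusion $\mathcal A \subsetneq f^{-1}(\mathcal A)$; this is exactly why the paper inflates the naive set $\mathcal P(f) \cup \mathcal C(f)$ by the extra fibre $f^{-1}(\{c\})$, so as to guarantee $|\mathcal A| \geq 3$ and make the Riemann-Hurwitz count decisive.
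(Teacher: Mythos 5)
Your proof is correct, and it reaches the same conclusion (that $F$ is not surjective, hence not an automorphism, hence strictly contracting by Schwarz--Pick) via a genuinely different mechanism than the paper. The paper exhibits a single explicit point omitted by $F$: it takes $x\in f^{-1}(\{c\})\subset\mathcal A$, a further preimage $y\in f^{-1}(\{x\})$, and checks directly (using $c\notin\mathcal P(f)$) that $y\notin\mathcal A$; any lift $\tilde y$ of $y$ is then missed by $F$ because $f\circ\pi\circ F(\mathbb D)=\pi(\mathbb D)=\hat{\mathbb C}\setminus\mathcal A$ cannot contain $x=f(y)$. You instead argue at the level of sets: $\pi\circ F(\mathbb D)\subset Y:=\hat{\mathbb C}\setminus f^{-1}(\mathcal A)$, and you show $Y\subsetneq X$ by combining forward invariance of $\mathcal A$ (so $\mathcal A\subset f^{-1}(\mathcal A)$) with the Riemann--Hurwitz count $|f^{-1}(\mathcal A)|\geq d|\mathcal A|-(2d-2)>|\mathcal A|$. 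Both arguments exploit the enlargement of $\mathcal P(f)\cup\mathcal C(f)$ by the fibre $f^{-1}(\{c\})$, but you use it only to guarantee $|\mathcal A|\geq 3$ so that the counting is decisive, whereas the paper uses the fibre to anchor the explicit preimage chain $y\mapsto x\mapsto c$. Your counting approach is slightly more abstract and invokes Riemann--Hurwitz where the paper gets by with a bare-hands verification, but it is arguably cleaner in that it does not depend on tracking a particular orbit segment and more transparently shows the role of $|\mathcal A|\geq 3$. One small remark: the forward invariance of $\mathcal A$ is not strictly needed for $Y\subsetneq X$ --- since $f$ is surjective, $f^{-1}(\mathcal A)\subset\mathcal A$ would force $|f^{-1}(\mathcal A)|\leq|\mathcal A|$, already contradicting the Riemann--Hurwitz bound --- but including it makes the containment $\mathcal A\subsetneq f^{-1}(\mathcal A)$ explicit, which is pleasant.
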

\begin{proof}
Schwarz lemma implies that if $F$ is not an isomorphism of the Poincar\'e disk, then $DF(z)$ is $\tilde{d}_h$-contracting for every $z\in \mathbb{D}$. We are going to show that $f$ is not surjective and hence can not be an isomorphism. Choose a point $x\in \mathcal A$ which is a preimage of the critical point $c$. Let $y$ be a preimage of $x$. We recall that $c$ is not in the postcritical set, so $y$ cannot be in $\mathcal A$. Now take any point $\tilde{y}\in {\pi}^{-1}({y})$. Since we have 
$$f\circ \pi \circ F(\mathbb{D})=\pi(\mathbb{D})=\hat{\mathbb{C}}\setminus \mathcal A,$$
$\tilde{y}$ cannot be in the range of $F$.
\end{proof}
The following corollaries are immediate consequences of the previous lemma:
\begin{corollary}\label{contracting-inv-brnch}
At every point $x\in \hat{\mathbb{C}}\setminus \mathcal{A}$, any inverse branch of $f$ has a contracting derivative for the metric $d_h$.
\end{corollary}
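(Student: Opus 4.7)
The plan is to transfer the contraction established on the universal cover $\mathbb{D}$ down to $\hat{\mathbb{C}}\setminus\mathcal{A}$, using the fact that $\pi$ is, by construction of $d_h$, a local isometry between $(\mathbb{D},\tilde{d}_h)$ and $(\hat{\mathbb{C}}\setminus\mathcal{A},d_h)$. The only mild subtlety to handle first is to verify that the derivative $Dg(x)$ of an inverse branch $g$ at $x\in\hat{\mathbb{C}}\setminus\mathcal{A}$ really maps between tangent spaces at points where $d_h$ is defined — i.e.\ that its image $g(x)$ is again in $\hat{\mathbb{C}}\setminus\mathcal{A}$.

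For this preliminary step I would use the explicit form $\mathcal{A}=\mathcal{P}(f)\cup\mathcal{C}(f)\cup f^{-1}(\{c\})$ to show that if $y\in f^{-1}(\{x\})$ lies in $\mathcal{A}$ then so does $x$: if $y\in \mathcal{C}(f)\cup\mathcal{P}(f)$ then $x=f(y)\in\mathcal{P}(f)\subset\mathcal{A}$, and if $y\in f^{-1}(\{c\})$ then $x=c\in\mathcal{C}(f)\subset\mathcal{A}$. Contrapositive: $x\notin\mathcal{A}$ forces $y=g(x)\notin\mathcal{A}$.

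Next I would fix lifts $\tilde{x}\in\pi^{-1}(\{x\})$ and $\tilde{y}\in\pi^{-1}(\{y\})$ and let $F:\mathbb{D}\to\mathbb{D}$ be the lift of the chosen inverse branch built in the excerpt, so that $F(\tilde{x})=\tilde{y}$ and $f\circ\pi\circ F=\pi$. In a neighborhood of $\tilde{x}$, applying $f$ to both $\pi\circ F$ and $g\circ\pi$ yields $\pi$; since $f$ is a local biholomorphism at $\tilde{y}$ and the two maps agree at $\tilde{x}$, uniqueness of the inverse branch gives $\pi\circ F=g\circ\pi$ near $\tilde{x}$. Differentiating at $\tilde{x}$ produces
\[D\pi(\tilde{y})\circ DF(\tilde{x})=Dg(x)\circ D\pi(\tilde{x}).\]

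Finally, since $d_h$ was defined as the push-forward of $\tilde{d}_h$ under the covering $\pi$ (which is a local biholomorphism and thus a local isometry for these metrics), both $D\pi(\tilde{x})$ and $D\pi(\tilde{y})$ are linear isometries between the corresponding tangent spaces equipped with their hyperbolic norms. The identity above then shows that the $d_h$-operator norm of $Dg(x)$ equals the $\tilde{d}_h$-operator norm of $DF(\tilde{x})$, which is strictly less than $1$ by the previous lemma. Hence $Dg(x)$ is contracting for $d_h$, as claimed. I do not expect any real obstacle here — the author himself calls this an immediate consequence — the most delicate step being the backward-invariance check of $\hat{\mathbb{C}}\setminus\mathcal{A}$, which is essentially built into the definition of $\mathcal{A}$.
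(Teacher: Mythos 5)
Your proof is correct, and it is the natural unpacking of what the paper leaves implicit: the paper states this corollary without proof, calling it an immediate consequence of the preceding lemma, and your argument --- that the covering $\pi$ is a local isometry by construction of $d_h$, that $\pi\circ F=g\circ\pi$ near a lift $\tilde{x}$, and hence that the operator norm of $Dg(x)$ with respect to $d_h$ equals that of $DF(\tilde{x})$ with respect to $\tilde{d}_h$, which is $<1$ --- is exactly what that remark is shorthand for. The one detail that genuinely needs to be said, and which you correctly supply, is the backward invariance of $\hat{\mathbb{C}}\setminus\mathcal{A}$ under inverse branches (so that $g(x)$ lies where $d_h$ is defined); this is also silently used in the paper's construction of the lift $F$, so your verification is a useful addition rather than a deviation.
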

\begin{corollary}\label{all-per-repell}
Any periodic point of $f$ is repelling.
\end{corollary}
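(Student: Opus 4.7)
The plan is to fix a periodic point $p$ of period $n\ge 1$ with cycle $\mathcal O=\{p,f(p),\dots,f^{n-1}(p)\}$, and to split according to whether $\mathcal O$ meets the ``bad'' set $\mathcal A=\mathcal P(f)\cup\mathcal C(f)\cup f^{-1}(\{c\})$ on which the covering $\pi:\mathbb D\to\hat{\mathbb C}\setminus\mathcal A$ was constructed. First I would show that $\mathcal O\cap\mathcal A\subseteq\mathcal P(f)$. The strictly postcritically finite hypothesis forbids periodic critical points, hence $\mathcal O\cap\mathcal C(f)=\emptyset$; moreover, if some $f^i(p)\in f^{-1}(\{c\})$, then $c=f^{i+1}(p)$ would be periodic, again contradicting that hypothesis, so $\mathcal O\cap f^{-1}(\{c\})=\emptyset$. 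Since $\mathcal P(f)$ is forward invariant under $f$, the cycle $\mathcal O$ either avoids $\mathcal A$ entirely, or is entirely contained in $\mathcal P(f)$.

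In the latter case, the cycle $\mathcal O$ is a periodic orbit onto which some critical orbit eventually lands, and so by the very definition of ``strictly postcritically finite'' this cycle is automatically repelling. In the former case, I would consider the inverse branch $g$ of $f^n$ defined near $p$ with $g(p)=p$, obtained as the composition of inverse branches of $f$ following the backward cycle $p\mapsto f^{n-1}(p)\mapsto\dots\mapsto f(p)\mapsto p$. Each factor is $d_h$-contracting at the relevant orbit point by Corollary \ref{contracting-inv-brnch}, so the composition $g$ is $d_h$-contracting at $p$. Because $p\notin\mathcal A$, the pushforward hyperbolic metric $d_h$ has a smooth positive density at $p$, and hence $d_h$-contraction of $g$ at the fixed point $p$ is equivalent to $|g'(p)|<1$ in any holomorphic chart. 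Since $(f^n)'(p)\cdot g'(p)=1$, this forces $|(f^n)'(p)|>1$, which is exactly the repelling condition.

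The only delicate point is the passage from the $d_h$-contraction estimate to the multiplier inequality $|(f^n)'(p)|>1$; it works precisely because $p$ lies in the smooth locus $\hat{\mathbb C}\setminus\mathcal A$ of the pushforward hyperbolic metric, so the conformal density relating $d_h$ to a Euclidean chart is finite and nonzero at the fixed point and cancels in the ratio defining the multiplier. Beyond this, the argument is a direct application of Corollary \ref{contracting-inv-brnch} combined with the elementary observation that a periodic orbit can only intersect $\mathcal A$ inside the postcritical set $\mathcal P(f)$.
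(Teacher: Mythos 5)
Your proof is correct and follows the same contraction-of-the-hyperbolic-metric idea that the paper intends when it calls the corollary an ``immediate consequence'' of the preceding lemma. You rightly notice, however, that Corollary~\ref{contracting-inv-brnch} only applies at points of $\hat{\mathbb C}\setminus\mathcal A$, so the case where the cycle meets $\mathcal A$ must be handled separately; the paper glosses over this. Your reduction of $\mathcal O\cap\mathcal A$ to $\mathcal O\cap\mathcal P(f)$ is clean (no periodic critical points rules out $\mathcal C(f)$; $f^{-1}(\{c\})$ would force $c$ itself to be periodic), and the dichotomy ``$\mathcal O\cap\mathcal A=\emptyset$ or $\mathcal O\subseteq\mathcal P(f)$'' then follows from forward invariance of $\mathcal P(f)$. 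For the postcritical case you simply read the conclusion off the paper's stated definition of strictly postcritically finite (critical orbits land on \emph{repelling} cycles), which is legitimate given that definition, though it is worth being aware that under the alternative parenthetical formulation (``no periodic critical point'') this case would require its own argument, e.g.\ by ruling out attracting, parabolic, Siegel, Cremer and Herman cycles using the finiteness of the postcritical set. In the generic case your derivation is exactly the expected one: the density $\rho$ of $d_h$ is a finite positive conformal factor at $p$, and since $g(p)=p$ it cancels in $\|Dg(p)\|_{d_h}=\frac{\rho(g(p))}{\rho(p)}|g'(p)|=|g'(p)|<1$, whence $|(f^n)'(p)|>1$.
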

\begin{proof}[Proof of Proposition \ref{per-meas-density}]
We shall prove that every probability measure of $f$ can be approximated by invariant probability measures supported on the orbit of a periodic point. First let us show this for 
the case where the probability measure is ergodic. 

\begin{lemma}\label{ergodic-approx}
Any ergodic invariant probability measure $\mu\in{\mathcal{M}}_1(f)$, can be approximated by invariant probability measures supported on the orbit of a periodic point. 
\end{lemma}
\begin{proof}
Since $\mu$ is ergodic, we can find a point $x$ in the support of $\mu$ which is \emph{regular} for $\mu$ meaning that the sequence of the empirical measures $\{e_{n}^f(x)\}_{n\in\mathbb{N}}$ converges to $\mu$. If the orbit of $x$ intersects the set $\mathcal A$, the point $x$ is eventually periodic and in fact is a periodic point in $\mathcal A$. In this case, the measure $\mu$ is itself a measure supported on the orbit of the periodic point $x$. So let us assume that the orbit of $x$ is disjoint from $\mathcal A$. For small $r>0$, let $B_r(x)$ be the ball of radius $r$ about $x$ with respect to the metric $d_h$. Since the metric $d_h$ is complete, the closure of $B_r(x)$ is included in $\hat{\mathbb{C}}\setminus \mathcal A$. Note that there are only finite inverse branches of $f$, and we can use Corollary \ref{contracting-inv-brnch} to conclude that there is a number $0<\alpha<1$ such that any inverse branch of $f$ over $B_r(x)$ is at least $\alpha$-contracting.

On the other hand, since $x$ is in the support of $\mu$, and also a regular point for this measure, its orbit returns infinitely many times to its hyperbolic $\frac{r}{4}$-neighbourhood. Let $m\in\mathbb{N}$ be such that $\alpha^m<\frac{1}{2}$. Choose $n\in\mathbb{N}$ such that the orbit of $x$ up to $n$ iterations contains at least $m+1$ points inside $B_{\frac{r}{4}}(x)$, including $f^n(x)$. Let $U_0:= B_{r/2}(f^n(x))$, and for each $1\leq i \leq n$, denote the connected component of $f^{-i}(U_0)$ containing $f^{n-i}(x)$ by $U_i$. Since any inverse branch of $f$ is non-expanding, any $U_i$ is contained in a ball of radius $\frac{r}{2}$ around $f^{n-i}(x)$. And so when $f^{n-i}(x)$ is $\frac{r}{4}$ close to $x$, $U_i$ is contained in $B_r(x)$. This implies that $f^{-1}$ sending $U_i$ to $U_{i+1}$ is $\alpha$-contracting and so the branch $g$ of $f^{-n}$ from $U_0$ to $U_n$ is $\alpha^m$-contracting. Recalling that $\alpha^m<\frac{1}{2}$, this implies that $U_n$ is in $\frac{r}{4}$-neighbourhood of $x$. But $U_0$ covers the $\frac{r}{4}$-neighbourhood of $x$, so $g$ sends $U_0$ into itself, and is $\alpha^m$-contracting. Thus there is a a fixed point $p$ of $g$ in the closure of $U_0$. 
This fixed point is an $n$-periodic point of $f$ satisfying: 

\begin{align}\label{closing-orbit}
\forall i\in\{0,...,n\},\quad d_h(f^i(x),f^i(p))<\frac{r}{2}.
\end{align}

But there is a constant $C>0$ (depending only on $\mathcal A$) such that for any two points $x$ and $y$ in $\hat{\mathbb C} \setminus \mathcal A$ we have:

$$d(x,y)<Cd_h(x,y),$$
where $d(x,y)$ is the standard spherical metric between $x$ and $y$ in $\hat{\mathbb{C}}$. We refer the reader to \cite{bonk1996bounds}. So the orbit of $x $ and the periodic point $p$ are close to each other in the spherical metric:
$$\forall i\in\{0,...,n\},\quad d(f^i(x),f^i(p))<C\frac{r}{2},$$
and hence 
$$d_w(e^f_n(x),e^f_n(p))<C\frac{r}{2}.$$
By choosing $r$ small enough and $n$ large enough, we can guarantee that $e^f_n(x) $ is close to $\mu$. This shows that $\mu$ can be approximated by the invariant measures supported on the orbit of periodic points. 
\end{proof}
The final step in the proof of Proposition \ref{per-meas-density} is to show that every invariant measure of $f$ can be approximated by the invariant measures supported on the orbit of only one periodic point. For this we show that any finite convex combination of ergodic invariant measures of $f$ can be approximated by such measures, and since, the finite convex combinations of ergodic invariant measures are dense in the set of invariant measures of $f$ (according to ergodic decomposition theorem, any invariant measure cam be written as an integral of erg), Proposition \ref{per-meas-density} follows.

Let $\mu_1,...,\mu_k$ be $k$ ergodic invariant measures, and $\mu=\sum_{i=1}^{k}c_i\mu_i$ a convex combination of these measures for some $0\leq c_i \leq 1$ with $\sum_{i=1}^k c_i=1$. By lemma \ref{ergodic-approx} for each $1\leq i \leq k$, there exists a periodic point $p_i$ such that $d_w(\mu_i,e^f_{\infty}(p_i))$ is arbitrary small and hence $d_w(\mu,\sum_{i=1}^k c_i e^f_{\infty}(p_i))$ is small. So for our purpose, it is enough to show that the measure $\sum_{i=1}^k c_i e^f_{\infty}(p_i)$ can be approximated by invariant probability measures which are supported on the orbit of only one periodic point. To show this, For technical reasons it is better to bring into play another repelling periodic point $p_0$, which is not in the post critical set $\mathcal{P}(f)$. 

Since the Julia set of $f$ is the whole Riemann sphere, the set of all preimages of each periodic point $p_i$ is dense in $\hat{\mathbb{C}}$, and in particular has a point in the linearization domain of the other $k$ periodic points. So we can find $\epsilon>0$ such that the preimages of $\epsilon$-neighbourhood of $p_i$ has a connected component in the linearization domain of $p_{i+1}$ (for i=k, consider $p_0$ instead of $p_{i+1}$). Let us denote the $\epsilon$-neighbourhood of $p_i$ by $U_i$. Now note that preimages of $U_i$ has indeed a connected component in $U_{i+1}$ because any subset of the linearization domain, has preimages converging to the periodic point $p_i$. Take $l_i\in \mathbb{N}$ such that $f^{-l_i}(U_i)$ has a connected component in $U_{i+1}$ (in $U_0$, for $i=k$).

Now we find a periodic point, in a backward orbit of $U_0$ which returns to itself. For each set of natural numbers $\{n_1,...,n_k\}\subset\mathbb{N}$ such that $n_i$ is divisible by the period of $p_i$, consider the following backward orbit of $U_0$: the set $U_0$ is sent by $f^{-l_0}$ into $U_1$. Then for each $1\leq i \leq k$ spends $n_i$ backward iterations in the linearization domain of $p_i$, and then by $f^{-l_i}$ goes from $U_i$ to $U_{i+1}$ (to $U_0$ for $i=k$). So finally, we will obtain a preimage $\tilde{U}_0$ of $U_0$ in itself. Since $U_0$ does not intersect the post critical set, there is no critical point in the preimages of this set, and the inverse branch sending $U_0$ to $\tilde{U}_0$ is a homeomorphism, and in particular, by Brouwer fixed point theorem, it has a fixed point $p$. This fixed point is a periodic point for the map $f$ with the period equal to $N:=l_0+\sum_{i=1}^k l_i+n_i$. This periodic point spends $n_i$ iteration close to the orbit of $p_i$, so since the sum $\sum_{i=0}^k l_i$ is bounded, by choosing very large integers $n_i$ such that for each $i$, the number $\frac{n_i}{N}$ is close $c_i$, we can guarantee that $e^f_{N}(p)$ is arbitrarily close to $\sum_{i=1}^k c_i e^f_{\infty}(p_i)$. This finishes the proof of Proposition \ref{per-meas-density}. 

\end{proof}


\bibliographystyle{plain}
\bibliography{references}

\end{document}